\newtheorem{remark}{Remark}
\DeclareSIUnit\mmHg{mmHg}
\xpatchcmd{\@todo}{\setkeys{todonotes}{#1}}{\setkeys{todonotes}{inline, #1}}{}{}
\pgfplotsset{compat=newest}
\definecolor{colorA}{RGB}{49,140,231}
\definecolor{colorB}{RGB}{238,16,16}
\definecolor{colorC}{RGB}{34,120,15}
\definecolor{colorD}{RGB}{136,77,167}
\definecolor{colorE}{RGB}{224,17,95}
\definecolor{colorF}{RGB}{204,85,0}
\definecolor{colorG}{RGB}{214,39,40}
\definecolor{colorH}{RGB}{44,160,44}
\definecolor{colorI}{RGB}{148,103,189}
\definecolor{colorJ}{RGB}{140,86,75}
\definecolor{crimson2143940}{RGB}{214,39,40}
\definecolor{darkgray176}{RGB}{176,176,176}
\definecolor{darkorange25512714}{RGB}{255,127,14}
\definecolor{forestgreen4416044}{RGB}{44,160,44}
\definecolor{lightgray204}{RGB}{204,204,204}
\definecolor{steelblue31119180}{RGB}{31,119,180}
\definecolor{mediumpurple148103189}{RGB}{148,103,189}
\definecolor{customdarkblue}{HTML}{0C2472}
\definecolor{customcyan}{HTML}{00BFFF}
\definecolor{customorange}{HTML}{F1892C}
\definecolor{custompurple}{HTML}{800080}
\definecolor{customgreen}{HTML}{00FF00}
\definecolor{customyellow}{HTML}{FFD700}
\newtheorem{theorem}{Theorem}
\newcommand{\feelpp}{Feel\nolinebreak\hspace{-.05em}\raisebox{.4ex}{\tiny\bf +}\nolinebreak\hspace{-.10em}\raisebox{.4ex}{\tiny\bf +}}
\newcommand\addplotgraphicsnatural[2][]{%
    \begingroup
    \pgfqkeys{/pgfplots/plot graphics}{#1}%
    \setbox0=\hbox{\includegraphics{#2}}%
    %
    \pgfmathparse{\wd0/(\pgfkeysvalueof{/pgfplots/plot graphics/xmax} - \pgfkeysvalueof{/pgfplots/plot graphics/xmin})}%
    \let\xunit=\pgfmathresult
    \pgfmathparse{\ht0/(\pgfkeysvalueof{/pgfplots/plot graphics/ymax} - \pgfkeysvalueof{/pgfplots/plot graphics/ymin})}%
    \let\yunit=\pgfmathresult
    %
    \xdef\marshal{%
        \noexpand\pgfplotsset{unit vector ratio={\xunit\space \yunit}}%
    }%
    \endgroup
    %
    \marshal
    \addplot graphics[#1] {#2};
}
\newcommand\resetstackedplots{
    \pgfplots@stacked@isfirstplottrue
}
\title{Mathematical modeling and simulation of coupled aqueous humor flow and temperature distribution in a realistic 3D human eye geometry}
\author{%
    \texorpdfstring{
        \large Thomas Saigre\textsuperscript{1$\dagger$}, Vincent Chabannes\textsuperscript{1}, Christophe Prud'homme\textsuperscript{1}, Marcela Szopos\textsuperscript{2}, \\
        \small\textsuperscript{1}Institut de Recherche Mathématique Avancée, UMR 7501 Université de Strasbourg et CNRS\\
        \small\textsuperscript{2}Université Paris Cité, CNRS, MAP5, F-75006 Paris, France\\
        \small\textsuperscript{$\dagger$}Corresponding author: \href{mailto:thomas.saigre@cemosis.fr}{\texttt{thomas.saigre@cemosis.fr}}
    }{Thomas Saigre, Christophe Prud'homme, Marcela Szopos, Vincent Chabannes}
}
\def\@subject{}
\date{09-01-2026}
\newcommand{\vct}[1]{{\vec{#1}}}
\newcommand{\vctAlg}[1]{\mathbold{#1}}
\newcommand{\mat}[1]{\underline{\underline{#1}}}
\newcommand{\matAlg}[1]{\underline{\underline{\boldsymbol{#1}}}}
\renewcommand{\P}{\mathbb{P}}
\newcommand{\R}{\mathbb{R}}
\newcommand{\V}{\mathbold{V}}
\renewcommand{\d}{~\mathrm{d}}
\newcommand{\dx}{\d \vct{x}}
\newcommand{\review}[1]{#1}
\begin{document}

\thispagestyle{empty}

\maketitle

\begin{abstract}
    We present a comprehensive computational model to simulate the coupled dynamics of aqueous humor flow and heat transfer in the human eye.
    To manage the complexity of the model, we make significant efforts in meshing and efficient solution of the discrete problem using high-performance resources.
    The model accurately describes the dynamics of the aqueous humor in the anterior and posterior chambers and accounts for convective effects due to temperature variations.
    Results for fluid velocity, pressure, and temperature distribution are in good agreement with existing numerical results in the literature.
    Furthermore, the effects of postural changes and wall shear stress behavior are analyzed, providing new insights into the mechanical forces acting on ocular tissues.
    Overall, the present contribution provides a detailed three-dimensional simulation that enhances the understanding of ocular physiology and may contribute to further progress in clinical research and treatment optimization in ophthalmology.
\end{abstract}

\noindent
\textbf{Keywords :} mathematical and computational ophthalmology, finite element method, thermo-fluid dynamics.

\section{Introduction}
\label{sec:intro}

Understanding the behavior of the human eye is challenging, as it involves the study of the interaction between various physical phenomena, such as heat transfer, fluid dynamics, and tissue deformation.
The eye is a complex organ in which these phenomena are intricately linked and influence each other in ways that are not yet fully understood. For example, the flow of aqueous humor within the eye can affect intraocular pressure, which in turn can influence the overall health of the tissues.
Therefore, it is crucial to develop accurate and efficient computational models to simulate the multi-physics physiology of the ocular system.
These models must integrate the various physical processes at play, and capture the interactions between thermal regulation, fluid movements, and mechanical responses of the eye tissues.
As a result, researchers can gain a more comprehensive understanding of the underlying mechanisms of ocular physiology, pathology and therapeutic options.
Examples in the latter case include hyperthermia for eye tumors \cite{li_modeling_2010}, cell injection treatment to cure bullous keratopathy \cite{kinoshita_injection_2018}, or ocular drug delivery methods \review{\cite{bhandari_ocular_2021,sadeghi_mathematical_2024}}.
In addition, invasive measurements on human subjects are complex to perform and may result in inaccurate results \cite{rosenbluth_temperature_1977}.
Therefore, numerical simulations are a valuable tool for investigating the complex interactions between the different physical phenomena in the eye, providing insights that are difficult or even impossible to obtain experimentally.

In the present work, we focus on simulating the flow of the aqueous humor in the anterior and posterior chambers of the human eye and its coupling with the heat transfer inside the eyeball.
Previous studies have investigated several aspects of these complex interactions, as reviewed in \cite{guidoboni_mathematical_2019}. For instance, \cite{heys_boussinesq_2002, wang_fluid_2016, murgoitio-esandi_mechanistic_2023} modeled flow coupled with heat transfer in the anterior chamber (AC) and posterior chamber (PC), while \cite{sacco_role_2023} explored the impact of the intraocular  pressure on aqueous humor (AH) flow and drainage. Other works, such as \cite{canning_fluid_2002, ooi_simulation_2008, bhandari_effect_2020,kilgour_operator_2021,dvoriashyna_mathematical_2022} examined the thermo-fluid dynamics of AH flow in the AC with specified boundary conditions.
However, these studies often focused on simplified geometries or did not fully couple the heat transfer within the entire eyeball.

\review{Mathematical models of ocular drug delivery, which are a particularly challenging field of interest, have been extensively developed, see for instance~\cite{sadeghi_mathematical_2024} and the references therein.
These models can either derive parameter values from \emph{in vivo} data, using a top-down approach or predict concentration profiles \emph{in vivo} based on mechanistic parameters, using a bottom-up modeling approach. The latter category includes computational fluid dynamics approaches that can accurately capture details of fluid flow, pressure, temperature gradients and drug concentrations in realistic anatomical models. Among the many interesting contributions in this area, we mention here:
\begin{inparaenum}[\it (i)]
    \item the pioneering works ~\cite{missel_physiologically_2019} for topical ocular delivery, and ~\cite{missel_simulating_2010} for intravitreal drug injection;
    \item the enhanced versions~\cite{lamminsalo_extended_2018,lamminsalo_extended_2020} including more complex anterior elimination and gravity effects;
    \item the original contributions to modeling intravitreal injections, incorporating interspecies translation~\cite{missel_simulating_2012} or post-injection eye deflation~\cite{ruffini_drug_2024}. In what concerns other administration routes, such as intracameral injections or subconjunctival administration, mostly simplified top-down modeling approaches were derived~\cite{sadeghi_mathematical_2024}, due to their more occasional use and relatively sparse data that are currently available.
\end{inparaenum}
}


In this challenging context, our work develops a three-dimensional mathematical and computational model that simulates heat transfer throughout the whole human eyeball, coupled with the dynamic flow of AH in both the AC and PC.
The present contribution represents an extension of the work presented in \cite{saigre_model_2024}, where only heat transfer inside the human eyeball was investigated, with a particular focus on the parametric effects.
The model aims to provide deeper insights into the thermal environment of the eye and its interaction with fluid dynamics, which is essential for understanding the ocular physiopathology and improving treatment strategies such as drug delivery methods. Special attention is paid to the impact of postural orientation on flow recirculations within the eye. Preliminary results have been presented in \cite{saigre_coupled_2024_abstract}, and we here further extend our analysis and perform a thorough validation process of our findings.
Furthermore, the wall shear stress (WSS) generated by the AH flow is an important biomechanical factor, influencing ocular tissue health and potentially impacting the drainage pathways, which are relevant to conditions such as glaucoma~\cite{yamamoto_effect_2010,yang_unraveling_2022}. We therefore include in our development an original contribution, allowing accurate and efficient computations of the WSS in the anterior and posterior chambers.

Solving the coupled three-dimensional thermo-fluid dynamics model of the eye numerically poses significant computational challenges due to the complexity and nonlinearity of the governing equations.
The interaction between heat transfer and fluid flow requires solving large, sparse linear systems that can be computationally expensive and time-consuming.
To address these challenges, it is essential to employ adapted preconditioners that can enhance the convergence rate of iterative solvers. By using tailored preconditioning techniques such as GAMG or the Schur complement~\cite{elman_finite_2014}, we achieve faster and more stable solutions, enabling the simulation of more detailed and realistic models of ocular physiology.

The remainder of the article is organized as follows: \cref{sec:model-phys} is devoted to the description of the biophysical model of the human eye, focusing on the detailed geometrical representation and the mechanisms governing AH flow.
\cref{sec:comp-framework} presents the discretization techniques and computational framework employed, including the numerical methods and preconditioning strategies used to solve the coupled equations efficiently.
\cref{sec:results} gathers and discusses the results obtained from our simulations, highlighting the model’s capabilities and potential applications.
Finally, \cref{sec:conclusion} summarizes our findings and outlines future research directions in the field of mathematical and computational ophthalmology.

\paragraph*{Notations:}
In this document, the notation $\vct{v}$ (with an upper arrow) denotes a vector as a physical quantity, while $\vctAlg{v}$ (in boldface) denotes a vector from the algebraic standpoint.
In the same spirit $\mat{A}$ (underlined) denotes a matrix or a tensor as a physical quantity, while $\matAlg{A}$ (in boldface) denotes a matrix from the algebraic standpoint.

\section{Biophysical model}
\label{sec:model-phys}

In this section, we present the biophysical model of the human eye that we aim to simulate.
We describe the geometrical model of the eye and the biomechanical behavior involved in heat transfer and fluid dynamics.

\subsection{Geometrical model of the human eye}

We denote by $\Omega$ the domain of the human eye presented in \cref{fig:geo-eye}.
This domain is divided into ten subdomains, representing the different parts of the eye, such as the cornea, the lens, the vitreous body, the retina, \emph{etc.}, with various physical properties.
We denote by $\Omega_\text{AH}$ the anterior and posterior chambers of the eye (brown part in \cref{fig:geo-eye}),
which are filled with the \emph{aqueous humor} (AH), as shown in \cref{fig:ah}.
The geometry, generated from a Computer-Aided Design (CAD) file of a human eye, is based on a realistic human eye model, with dimensions consistent with the average human eye anatomy~\cite{saigre_mathematical_2024}.
Note that
\begin{inparaenum}[\it (i)]
    \item the region corresponding to the suspensory ligaments, hatched in \cref{fig:geo-eye}, is not considered in the model and is included in the vitreous body;
    \item the iris-lens channel, see \cref{fig:ah}, is really thin~\cite{dvoriashyna_aqueous_2018}, very difficult to measure accurately and assumed to be in the order of 5 to \SI{10}{\micro\meter}\review{,
    \item the geometrical model contains a thorough description of the back segment of the eye, stemming for previous work \cite{sala_ocular_2023}.}
\end{inparaenum}

\review{
The biophysical model proposed in the present work and described in the next section involves the coupling between fluid dynamics in the anterior and posterior chambers, coupled with heat transfer in the entire ocular globe. Although other complex biofluid mechanisms pertaining to the posterior segment of the eye have not yet been incorporated and are being postponed for future work, we still include these regions in our realistic geometry for two main reasons. First, the viscoelastic properties of the vitreous gel were identified as playing an important role in ocular drug delivery models ~\cite{sadeghi_mathematical_2024}. Second, previous works ~\cite{scott_finite_1988} and sensitivity analysis studies we performed in~\cite{saigre_mathematical_2024} pointed out the significant impact of blood temperature and flow on the heat distribution. Therefore, all the domains depicted in \cref{fig:geo-eye} are included in the geometrical model. Moreover, the necessary steps from the CAD description to the mesh are described in \cite{chabannes_3d_2024}, are available in open access~\cite{saigre_mesh_2024} and could serve as a basis for future work.}

\begin{figure}[!ht]
    \centering
    \subfigure[Geometrical model of the human eye.]{
        \label{fig:geo-eye}
        \def\svgwidth{0.5\columnwidth}
        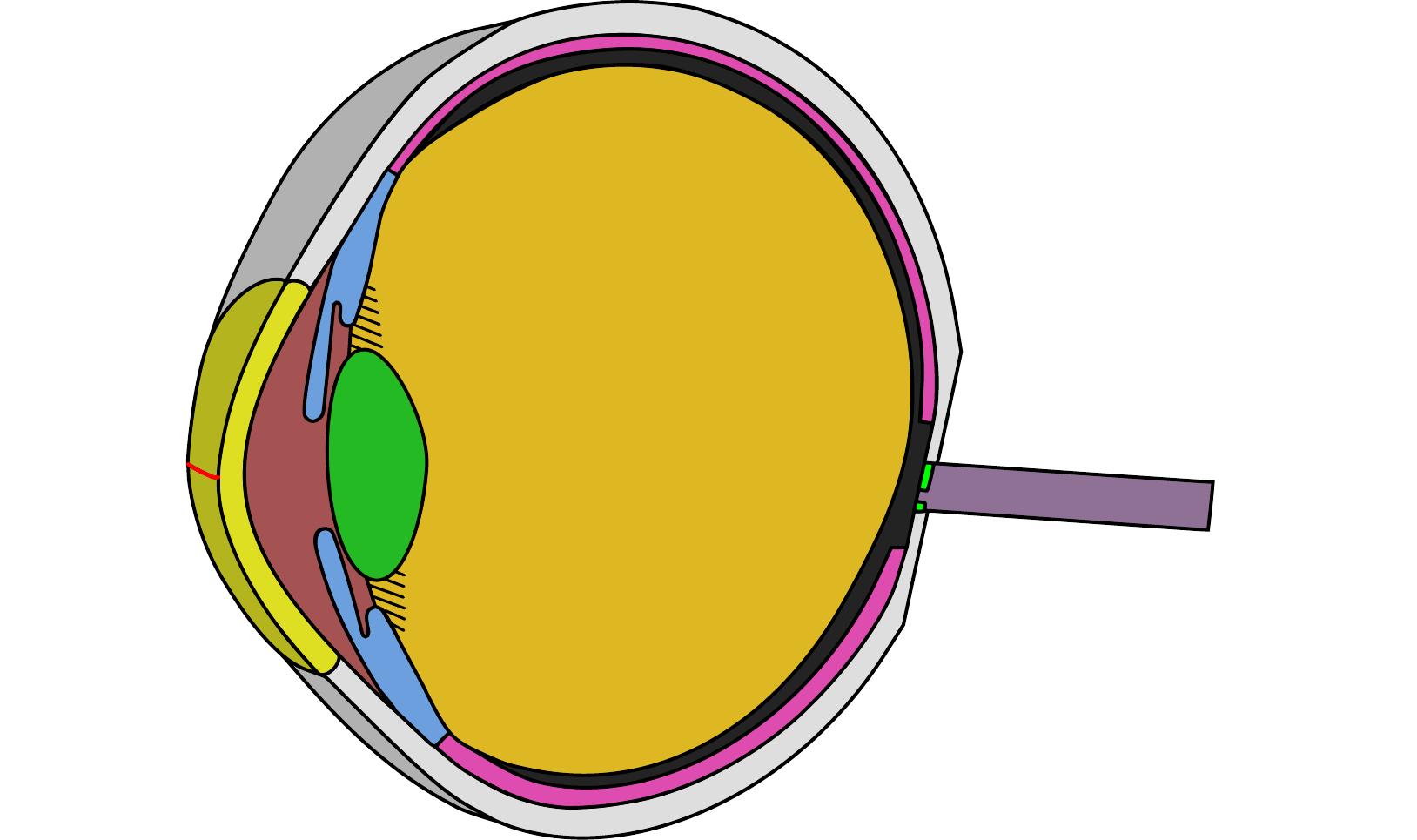
    }
    \subfigure[Vertical cut of the anterior and posterior chambers of the eye.]{
        \label{fig:ah}
        \def\svgwidth{0.3\columnwidth}
        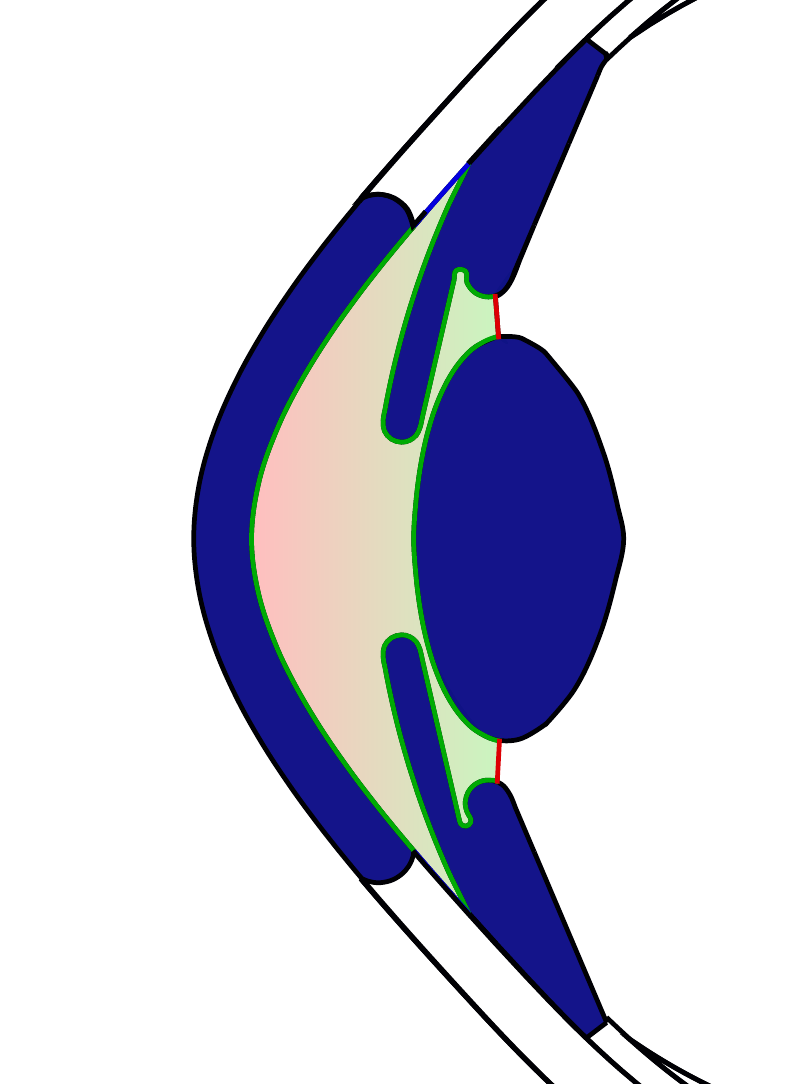
    }
    \caption{Description of the human eyeball (left panel), zoom on the anterior and posterior chambers of the eye (right panel).}
\end{figure}

\subsection{Bio-heat and fluid dynamics model}

The AH is a transparent fluid produced by the ciliary body that flows from the \emph{posterior chamber} (PC) to the \emph{anterior chamber} (AC), where it is drained through two pathways, the trabecular meshwork, and the uveoscleral pathway, see \cref{fig:ah-flow-anatomy} for a simplified view.
AH flow plays a fundamental role in maintaining the intraocular pressure (IOP)\index{IOP} level.
In addition to the hydraulic pressure difference created by production and drainage, the AH dynamic is influenced by posture and thermal factors.
Specifically, convective effects are produced by the temperature difference between the external environment at the corneal surface and the internal surface, which is at the body temperature.

\begin{figure}[!ht]
    \centering
    \def\svgwidth{0.4\columnwidth}
\begingroup%
  \makeatletter%
  \providecommand\color[2][]{%
    \errmessage{(Inkscape) Color is used for the text in Inkscape, but the package 'color.sty' is not loaded}%
    \renewcommand\color[2][]{}%
  }%
  \providecommand\transparent[1]{%
    \errmessage{(Inkscape) Transparency is used (non-zero) for the text in Inkscape, but the package 'transparent.sty' is not loaded}%
    \renewcommand\transparent[1]{}%
  }%
  \providecommand\rotatebox[2]{#2}%
  \newcommand*\fsize{\dimexpr\f@size pt\relax}%
  \newcommand*\lineheight[1]{\fontsize{\fsize}{#1\fsize}\selectfont}%
  \ifx\svgwidth\undefined%
    \setlength{\unitlength}{696.00002018bp}%
    \ifx\svgscale\undefined%
      \relax%
    \else%
      \setlength{\unitlength}{\unitlength * \real{\svgscale}}%
    \fi%
  \else%
    \setlength{\unitlength}{\svgwidth}%
  \fi%
  \global\let\svgwidth\undefined%
  \global\let\svgscale\undefined%
  \makeatother%
  \begin{picture}(1,0.76077586)%
    \lineheight{1}%
    \setlength\tabcolsep{0pt}%
    \put(0,0){\includegraphics[width=\unitlength,page=1]{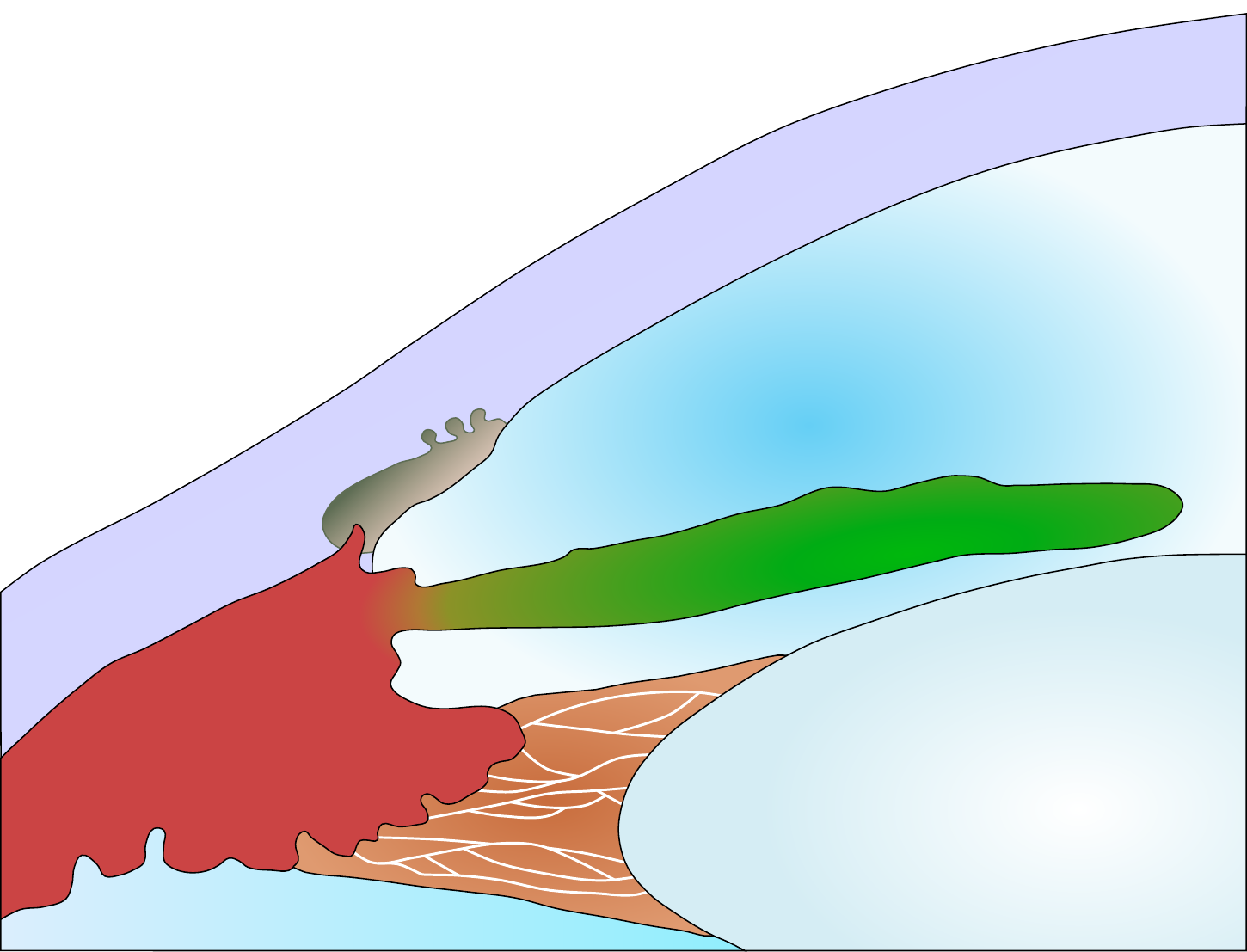}}%
    \put(0.85422689,0.12){\color[rgb]{0,0,0}\makebox(0,0)[t]{\lineheight{1.25}\smash{\begin{tabular}[t]{c}Lens\end{tabular}}}}%
    \put(0.28,0.48){\color[rgb]{0,0,0}\makebox(0,0)[t]{\lineheight{1.25}\smash{\begin{tabular}[t]{c}Trabecular\\meshwork\end{tabular}}}}%
    \put(0.58608856,0.30){\color[rgb]{0,0,0}\makebox(0,0)[t]{\lineheight{1.25}\smash{\begin{tabular}[t]{c}Iris\end{tabular}}}}%
    \put(0.6290214,0.59512269){\rotatebox{28}{\color[rgb]{0,0,0}\makebox(0,0)[t]{\lineheight{1.25}\smash{\begin{tabular}[t]{c}Cornea\end{tabular}}}}}%
    \put(0.79653537,0.46789173){\color[rgb]{0,0,0}\makebox(0,0)[t]{\lineheight{1.25}\smash{\begin{tabular}[t]{c}Anterior chamber\end{tabular}}}}%
    \put(0.18,0.14047472){\color[rgb]{0,0,0}\makebox(0,0)[t]{\lineheight{1.25}\smash{\begin{tabular}[t]{c}Ciliary body\end{tabular}}}}%
    \put(0,0){\includegraphics[width=\unitlength,page=2]{img-eye-ah_flow.pdf}}%
  \end{picture}%
\endgroup%

    \caption{Production and drainage of AH in the front part of the eye, adapted from \cite{ramakrishnan_aqueous_2013}.}
    \label{fig:ah-flow-anatomy}
\end{figure}

Following the approach of \citeauthor{kilgour_operator_2021}~\cite{kilgour_operator_2021} and \citeauthor{wang_fluid_2016}~\cite{wang_fluid_2016} (see also \cite{guidoboni_mathematical_2019} and references therein for a complete review), we make the following assumptions:
\begin{inparaenum}[\it (i)]
    \item The aqueous humor is considered an incompressible Newtonian fluid due to its low compressibility and viscosity.
    \item Density variations in the fluid are small and can be neglected except in the buoyancy term. This approximation, named \emph{Boussinesq approximation}, allows us to model the buoyancy effects due to temperature differences without accounting for full density variations.
\end{inparaenum}

Under these assumptions, the steady flow of AH is governed by the incompressible Navier-Stokes equations coupled with heat transfer:

\begin{subequations}
    \begin{alignat}{2}
        \rho (\vct{u}\cdot\nabla) \vct{u} - \nabla \cdot \mat{\sigma} &= -\rho\beta(T-T_\text{ref})\vct{g} &\quad & \text{in $\Omega_\text{AH}$}, \label{eq:pbStokes:stokes}\\
        \nabla\cdot \vct{u}                         &= 0            &      & \text{in $\Omega_\text{AH}$}, \label{eq:pbStokes:uncom}\\
        \rho C_p \vct{u}\cdot\nabla T - k\nabla^2 T &= 0            &      & \text{in $\Omega$}, \label{eq:pbStokes:heat}
    \end{alignat}
\label{eq:pbStokes}
\end{subequations}%
\unskip%
where \review{$\rho$ [\si{\kilogram\per\meter^3}] is the density of the fluid at reference temperature} $T_\text{ref}$ [\si{\kelvin}], $C_p$ [\si{\joule\per\kilogram\per\kelvin}] its specific heat, and $k$ [\si{\watt\per\meter\per\kelvin}] its thermal conductivity.
Note that $k$ is a discontinuous piecewise constant function because of the different materials in the eye.
We refer to \cite[Chap. 1]{saigre_mathematical_2024} for more details.
The quantity $T$ [\si{\kelvin}] is the temperature of the eye, while $p$ [\si{\pascal}] is the pressure of the aqueous humor fluid (also expressed in \si{\mmHg} in a biologic context),
and $\vct{u}$ [\si{\meter\per\second}] is its velocity.
The behavior of the fluid is characterized by the Cauchy stress tensor\index{Cauchy stress tensor} $\mat{\sigma}$ defined as
\begin{equation}
    \mat{\sigma}(\vct{u}, p) = -p\mat{I} + 2\mu\mat{D}(\vct{u}),
    \label{eq:pbStokes:stress}
\end{equation}
where \review{$\mu$ [\si{\newton.\second\per\meter^2}] is the dynamic viscosity of the fluid}, $\mat{I}$ is the identity tensor, and $\mat{D}(\vct{u}) = \frac12\left(\nabla \vct{u} + \nabla \vct{u}^T\right)$ is the \emph{strain rate tensor}\index{strain rate tensor}.
Following \cite{canning_fluid_2002,ng_fem_2006}, we neglect the metabolic heat generation in the eye due to blood perfusion as there is no literature data available on this topic.
However, such a term could be added in further analysis to enhance the model's accuracy.

The right-hand side term in \cref{eq:pbStokes:stokes} represents the gravitational force per unit volume, along with the Boussinesq approximation~\cite{drazin_hydrodynamic_2004},
utilized to account for the buoyancy effects due to temperature variations.
This approximation states that the fluid's density varies with temperature but remains virtually unaffected by pressure, as discussed above.
The coefficient $\beta$ [\si{\kelvin^{-1}}] is the fluid volume expansion coefficient, and $\vct{g}$ [\si{\meter.\second^{-2}}] the gravitational acceleration vector.
As the variation of temperature is small, we can consider that the density is constant in the fluid domain, and the Boussinesq approximation is valid.
Depending on the position of the patient (standing, laying supine or prone respectively), $\vct{g}$ can be either vertical ($\vct{g}=\left[0, -g, 0\right]^T$) or horizontal ($\vct{g}=\left[g, 0, 0\right]^T$, $\vct{g}=\left[-g, 0, 0\right]^T$ respectively), where $g$ is the gravitational acceleration.

We impose no-slip boundary conditions for the fluid velocity on the boundaries of $\Omega_\text{AH}$:
\begin{alignat}{2}
    \vct{u} &= \vct{0} &\quad & \text{on }\Gamma_C\cup \Gamma_I\cup \Gamma_L\cup\Gamma_\text{VH}\cup\Gamma_\text{Sc}.
    \label{eq:BcUp}
\end{alignat}
These boundary conditions, together with \cref{eq:pbStokes}, model the AH flow driven by thermal and gravitational effects.
The hydraulic pressure difference due to the production and drainage of AH is not explicitly accounted for, as previous studies have indicated that buoyancy is the dominant mechanism driving convective motion in the AH regardless of postural orientation~\cite{ooi_simulation_2008,kumar_numerical_2006}.

Note that alternative boundary conditions, such as non-homogeneous Dirichlet conditions~\cite{heys_boussinesq_2002,kilgour_operator_2021} or specified flow rates and pressures~\cite{wang_fluid_2016}, could also be considered but are beyond the scope of the present study.

Moreover, the heat transfer inside the eye is governed by the heat equation \eqref{eq:pbStokes:heat}, with the following boundary conditions,
taking into account the convective heat transfer with the surrounding tissues over a domain denoted as $\Gamma_\text{body}$; and the heat production due to the metabolism of the eye,
as well as the heat transfer with the ambient air over a domain called $\Gamma_\text{amb}$:
\begin{subequations}
\begin{alignat}{2}
    -k\textstyle\frac{\partial T}{\partial \vct{n}} &= h_\text{bl}(T-T_\text{bl}) &\quad & \text{on }\Gamma_\text{body}, \label{eq:BcT:BCTbody}\\
    -k\textstyle\frac{\partial T}{\partial \vct{n}} &= \underbrace{h_\text{amb}(T-T_\text{amb})}_\textit{(i)} + \underbrace{\sigma\varepsilon(T^4-T_\text{amb}^4)}_\textit{(ii)}+ \underbrace{E\vphantom{(h_\text{amb})}}_\textit{(iii)} &\quad & \text{on }\Gamma_\text{amb}, \label{eq:BcT:BCTamb}
\end{alignat}
\label{eq:BcT}
\end{subequations}
\unskip%
where $\Gamma_\text{body}$ and $\Gamma_\text{amb}$ are the boundaries of the eye in contact with the surrounding body and the ambient air, respectively.

Precisely, \cref{eq:BcT:BCTbody} models the convective heat transfer between the eye and the surrounding body,
where $h_\text{bl}$ [\si{\watt\per\meter\squared\per\kelvin}] is the heat transfer coefficient between the eye and the surrounding body, and $T_\text{bl}$ [\si{\kelvin}] is the blood temperature. \review{
Note that the impact of blood flow on the temperature could also being involved in other regions of the eye, modeled by a perfusion and a source term. Following a similar approach to ~\cite{ng_fem_2006}, we neglect the contribution of these terms in the present work. A more in-depth analysis of this assumption would be of interest, but is currently not available, mainly due to lack of literature data.}
On the other hand, \cref{eq:BcT:BCTamb} models three types of exchanges that are involved between the eyeball and the ambient air:
\begin{inparaenum}[\it (i)]
    \item Convective heat transfer, where $h_\text{amb}$ [\si{\watt\per\meter\squared\per\kelvin}] is the heat transfer coefficient between the eye and the ambient air, and $T_\text{amb}$ [\si{\kelvin}] is the ambient temperature;
    \item radiative heat transfer, where $\sigma = \qty{5.67e-8}{\watt\per\meter\squared\per\kelvin^4}$ represents the Stefan-Boltzmann constant, and $\varepsilon$ [--] is the emissivity of the cornea; and
    \item evaporative heat loss, due to tear evaporation at the surface of the eye, where $E$ [\si{\watt\per\meter\squared}] is the evaporation rate of the tear film.
\end{inparaenum}

\begin{remark}
    \label{rem:stokes}
    The momentum \cref{eq:pbStokes:stokes} could be further simplified by discarding the contribution of the non-linear term $\rho(\vct{u}\cdot\nabla)\vct{u}$, and considering a Stokes flow, since the Reynolds number is reported ti be relatively small in this case, see for instance \cite{wang_fluid_2016}.
    However, due to the coupling with the heat equation in the system, an iterative approach is still required to solve the problem.
    Moreover, several other works \cite{kilgour_operator_2021,wang_fluid_2016} consider the full Navier-Stokes equations, and therefore we utilized the same model for consistency and comparison.
    We performed a comparative study of different modeling approaches, see results in \cref{rmk:stokes:ns}.
\end{remark}

\subsection{\review{Wall shear stress analysis}}

We further focus on the wall shear stress (WSS), a critical parameter representing the tangential force per unit area exerted by the fluid on the wall due to viscous effects.
The insights gained from WSS analysis can inform clinical practices, contributing to personalized medicine, surgical optimization, and improved device design~\cite{yang_unraveling_2022,fernandez-vigo_computational_2018}.
Understanding the WSS distribution has significant implications in ocular physiology, including:

\begin{itemize}
    \item \textbf{Drug delivery:} High WSS regions may enhance the mixing and transport of drug particles within the AH, potentially increasing drug absorption rates through ocular tissues, see~\cite{xu_potential_2013,koutsiaris_wall_2016,saxer_use_2013}.
    By identifying these regions, drug delivery systems can be designed to target specific areas within the eye, improving therapeutic outcomes.

    \item \textbf{\review{Therapeutical} procedures:} WSS analysis provides valuable insights for optimizing \review{therapeutical}  interventions, \review{such as:}
\begin{inparaenum}[\it (i)]
    \item Design optimization: Knowledge of WSS distributions helps in planning \review{surgical} procedures that minimize mechanical stresses on ocular tissues, reducing the risk of tissue damage, \cite{kudsieh_numerical_2020};
    \item Implantable devices: Designing intraocular lenses and drainage devices that account for WSS can help prevent endothelial cell loss and maintain corneal health~\cite{repetto_phakic_2015,basson_computational_2024};
    \item Postoperative outcomes: Monitoring changes in WSS after \review{ocular therapies, such as endothelial cell injection \cite{kinoshita_injection_2018},} can help predict healing responses and the risk of complications.
\end{inparaenum}
\end{itemize}

\paragraph*{Definition of Wall Shear Stress}
The \emph{wall shear stress}\index{wall shear stress} $\vct{\tau}_w$ at a point on the wall is defined as the magnitude of the tangential component of the stress tensor acting on the wall, and is computed in the present work as:

\begin{equation}
    \vct{\tau}_w(\vct{u}, p) = \mu \left. \frac{\partial \vct{u}_\tau}{\partial \vct{n}} \right|_{\text{wall}},
    \label{eq:wss_simplified}
\end{equation}
under the no-slip boundary condition at the wall.
The vector $\vct{n}$ designates the unit outward normal vector to the wall,
and $\vct{u}_\tau$ is the tangential component of the velocity vector at the wall.

\section{Mathematical and computational framework}
\label{sec:comp-framework}

In this section, we present the mathematical and computational framework developed to solve the coupled fluid dynamics and heat transfer model described in \cref{eq:pbStokes,eq:BcUp,eq:BcT}.
We detail the variational formulation in \cref{sec:comp-framework:cont}, the geometrical and finite element discretization in Sections \ref{sec:comp-geo} and \ref{sec:coupled:fe} respectively, and the solution strategy employed in \cref{sec:comp-strategy}.

\subsection{Continuous setting}
\label{sec:comp-framework:cont}

We begin by deriving the variational formulation of the model. Let us introduce the following functional spaces:

\begin{enumerate}[\itshape (i)]
    \item The velocity space $\V := [H^1_0(\Omega_\text{AH})]^3$, consisting of vector fields $\vct{u}$ with square-integrable derivatives with a trace that vanishes on the boundary;
    \item the pressure space $Q := L^2_0(\Omega_\text{AH}) = \left\{ p \in L^2(\Omega_\text{AH}) \middle| \int_{\Omega_\text{AH}} p \, \mathrm{d}\vct{x} = 0 \right\}$, consisting of square-integrable scalar fields with zero mean; and
    \item the temperature space $W := H^1(\Omega)$.
\end{enumerate}

Let $\vct{v} \in \V$, $q \in Q$, and $\varphi \in W$ be test functions.
Multiplying Equations \eqref{eq:pbStokes:stokes}, \eqref{eq:pbStokes:uncom}, and \eqref{eq:pbStokes:heat} by $\vct{v}$, $q$, and $\varphi$, respectively, and integrating by part over the appropriate domains yield:

\begin{subequations}
\begin{gather}
    \nonumber
    \rho\int_{\Omega_\text{AH}} (\vct{u} \cdot \nabla )\vct{u} \cdot \vct{v} \dx + \mu \int_{\Omega_\text{AH}} \mat{D}(\vct{u}) : \nabla \vct{v} \dx - \int_{\Omega_\text{AH}} p \cdot \nabla \vct{v} \dx\\
        \qquad\qquad\qquad\qquad\qquad\qquad + \int_{\Omega_\text{AH}} \rho_0 \beta T\vct{g} \cdot \vct{v} \dx
        = \int_{\Omega_\text{AH}} \rho_0 \beta T_\text{ref} \vct{g} \cdot \vct{v} \dx, \\
    \int_{\Omega_\text{AH}} \nabla \cdot \vct{u} q \dx = 0, \\
    \rho C_p \int_{\Omega} \vct{u} \cdot \nabla T \varphi \dx + k \int_{\Omega} \nabla T \cdot \nabla \varphi \dx
        + \int_{\Gamma_\text{amb}} \left(h_\text{amb}T \d\sigma + \sigma\varepsilon T^4\right)\varphi \d\sigma
        + \int_{\Gamma_\text{body}} h_\text{bl} T\varphi \d\sigma \nonumber\\
        \qquad\qquad\qquad\qquad\qquad\qquad= \int_{\Gamma_\text{amb}} \left(h_\text{amb}T_\text{amb} + \sigma\varepsilon T^4_\text{amb}\right)\varphi \d\sigma
         + \int_{\Gamma_\text{body}} h_\text{bl} T_\text{bl}\varphi \d\sigma.
\end{gather}
\end{subequations}
We define the following bilinear and trilinear forms:

\begin{subequations}
\begin{gather}
    a_1(\vct{z}, \vct{u}, \vct{v}) = \rho\int_{\Omega_\text{AH}} (\vct{z} \cdot \nabla) \vct{u} \cdot \vct{v} \dx,\\
    a_2(\vct{u}, \vct{v}) = \mu \int_{\Omega_\text{AH}} \mat{D}(u) : \nabla \vct{v} \dx,\\
    b(p, \vct{v}) = -\int_{\Omega_\text{AH}} p \cdot \nabla \vct{v} \dx,\\
    d(T, \vct{v}) = -\int_{\Omega_\text{AH}} \rho_0 \beta T \vct{g} \cdot \vct{v} \dx,\\
    e(\vct{u}, T, \varphi) = \rho C_p\int_{\Omega} \vct{u}\cdot\nabla T \varphi \dx,\\
    f(T, \varphi) = k\int_{\Omega} \nabla T\cdot\nabla\varphi \dx + \int_{\Gamma_\text{amb}} \left(h_\text{amb}T + \sigma\varepsilon T^4\right)\varphi \d\sigma
        + \int_{\Gamma_\text{body}} h_\text{bl} T\varphi \d\sigma, \\
    \ell_1(\vct{v}) = \int_{\Omega_\text{AH}} \rho_0 \beta T_\text{ref} \vct{g} \cdot \vct{v} \dx, \\
    \ell_2(\varphi) = \int_{\Gamma_\text{amb}} \left(h_\text{amb}T_\text{amb} + \sigma\varepsilon T^4_\text{amb}\right)\varphi \d\sigma
         + \int_{\Gamma_\text{body}} h_\text{bl} T_\text{bl}\varphi \d\sigma.
\end{gather}
\end{subequations}

The variational formulation of the problem is then: Find $(\vct{u}, p, T) \in \V \times Q \times W$ such that for all $(\vct{v}, q, \varphi) \in \V \times Q \times W$:

\begin{alignat}{2}
    a_1(\vct{u},\vct{u}, \vct{v}) + a_2(\vct{u},\vct{v}) &+ b(p, \vct{v}) &+ d(T, \vct{v}) &= \ell_1(\vct{v}), \nonumber \\
    b(q, \vct{u}) & & &= 0, \label{eq:pb-var} \\
    e(\vct{u}
    , T, \varphi) & &+ f(T, \varphi) &= \ell_2(\varphi). \nonumber
\end{alignat}

\begin{theorem}[Existence and Uniqueness]
    The variational problem \eqref{eq:pb-var} has a unique solution.
\end{theorem}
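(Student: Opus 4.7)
The plan is to prove the theorem by a decoupling-plus-fixed-point strategy. Viewing \eqref{eq:pb-var} as the composition of two parametric subproblems --- a Navier-Stokes block at frozen temperature and a nonlinear heat block at frozen velocity --- I would show each is separately well posed and then build the map $\Phi : T \mapsto \vct{u}(T) \mapsto T(\vct{u}(T))$ and locate a fixed point.

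For the Navier-Stokes subproblem, given $T \in W$ I treat $-d(T, \cdot)$ as a known right-hand side in $\V'$. Coercivity of $a_2$ on $\V$ follows from Korn's inequality on $H^1_0(\Omega_\text{AH})^3$, the bilinear form $b$ satisfies the Ladyzhenskaya-Babuska-Brezzi inf-sup condition for the continuous $(\V, Q)$ pair, and $a_1$ is a bounded trilinear form enjoying the usual antisymmetry $a_1(\vct{z}, \vct{u}, \vct{u}) = 0$ on the divergence-free subspace of $\V$. A standard Galerkin approximation together with Rellich compactness yields existence of $(\vct{u}, p)$ with an a priori estimate of the form $\|\vct{u}\|_\V \le C_1(\|T\|_W + |T_\text{ref}|)$; uniqueness requires the familiar smallness condition coupling $\mu$, the Poincare constant, and the buoyancy data.

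For the heat subproblem, given a divergence-free $\vct{u} \in \V$ (extended by zero outside $\Omega_\text{AH}$), the trilinear form $e$ is skew-symmetric in its last two arguments after an integration by parts, so it does not affect coercivity. The linear part of $f$ is coercive on $W$ thanks to the Robin-type contributions on $\Gamma_\text{body} \cup \Gamma_\text{amb}$, which control the $L^2$ trace and, combined with the $H^1$ diffusion, the full norm. The quartic term $T \mapsto \int_{\Gamma_\text{amb}} \sigma \varepsilon T^4 \phi \d\sigma$ is the Gateaux derivative of a convex functional, hence monotone, and provides additional coercivity on $L^4(\Gamma_\text{amb})$. Working in $W \cap L^4(\Gamma_\text{amb})$, the resulting operator is hemicontinuous, monotone and coercive, so the Browder-Minty theorem delivers a unique solution $T = T(\vct{u})$ depending continuously on $\vct{u}$.

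It remains to close the loop. Under a smallness assumption on a Rayleigh-like combination of $\rho_0, \beta, |\vct{g}|, (T_\text{bl} - T_\text{amb}), \mu$ and $k$, together with the Navier-Stokes smallness condition of Step~1, the composed map $\Phi$ is a strict contraction on a suitable closed ball of $W$, and Banach's fixed-point theorem delivers existence and uniqueness simultaneously. Without smallness, existence can still be obtained by Schauder using the compactness of $H^1 \hookrightarrow L^q$ for $q < 6$, but uniqueness is lost in general. The main obstacle is precisely the interaction between the quartic radiative boundary term and the Navier-Stokes convection: to turn $T \mapsto T^4$ into a Lipschitz perturbation one needs an $L^\infty$ a priori bound on $T$, which I would obtain via a Stampacchia truncation argument or a maximum principle exploiting the physically positive and bounded data $T_\text{bl}, T_\text{amb}$. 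This confinement of the temperature to a physically reasonable interval is what makes the contraction estimate viable and is, in my view, the crux of the proof.
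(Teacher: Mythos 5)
The paper does not actually prove this theorem: its ``proof'' is a single sentence deferring to the literature, stating that existence and uniqueness ``can be established using fixed-point arguments and standard results for the Navier--Stokes equations coupled with heat transfer.'' Your sketch is therefore not a different route but a substantially more detailed realization of exactly the strategy the authors invoke: decouple into a buoyancy-forced Navier--Stokes block and a convection--diffusion--radiation block, solve each parametrically, and close the loop with Banach (under smallness) or Schauder (for existence alone). Two points are worth flagging. First, your observation that uniqueness requires a smallness condition on a Rayleigh-type combination of the data is correct and standard for Boussinesq systems; the theorem as stated asserts unconditional uniqueness, which is more than such fixed-point arguments deliver, so your more guarded formulation is the defensible one. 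Second, there is an ordering issue in your heat step: the map $t \mapsto t^4$ is not monotone on all of $\R$ (its primitive $t^5/5$ fails to be convex for $t<0$), so the Browder--Minty argument for the radiative boundary term only applies after you have established $T \geq 0$ --- via the Stampacchia truncation or maximum-principle argument you defer to the very end --- or if the weak formulation is written with $|T|^3T$ in place of $T^4$. With that reordering, your outline is a sound and essentially complete program for the proof the paper omits.
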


\begin{proof}
    A detailed proof is beyond the scope of this work.
    However, the existence and uniqueness can be established using fixed-point arguments and standard results for the Navier-Stokes equations coupled with heat transfer, as discussed in \cite{tsuzuki_existence_2015}.
\end{proof}

\subsection{Geometrical discretization}
\label{sec:comp-geo}

The geometry presented in \cref{fig:geo-eye} is discretized, with significant effort devoted to the AC and PC domains, where the coupled fluid-thermal problem is solved.
This meticulous approach ensures that the complexities of these regions are accurately captured, leading to more precise simulation results. The discretization results in a mesh composed of \pgfmathprintnumber{4967553} elements, highlighting the level of detail and refinement applied to the model, see ~\cref{fig:coupled:wss_mesh_refinement} and ~\cref{fig:temp}.
Such detailed refinement is essential for resolving the interactions between fluid flow and thermal dynamics within these areas, thereby improving the overall accuracy and reliability of the simulation outcomes.
A few remarks are in order in practice:
\begin{inparaenum}[\it (i)]
\item the mesh generation is achieved \textit{via} \cite{cascade_salome_2022},
\item the mesh adaptations and refinements are performed using \cite{mmg_tools_mmg_2022} and the level set features of \feelpp~\cite{prudhomme_feelppfeelpp_2024},
\item the mesh is not only refined to obtain a hierarchy of increasingly refined meshes but also adapted along the boundaries to obtain a graded mesh with respect to the distance to the boundaries, thanks to the level set features above-mentioned.
The latter feature is particularly important to obtain accurate surface quantities' approximation, such as the wall shear stress.
\end{inparaenum}

A complete description of the mesh generation and adaptation process is available in \cite{chabannes_3d_2024}.
Moreover, the family of meshes is available publicly~\cite{saigre_mesh_2024}.

\subsection{Finite element setting}
\label{sec:coupled:fe}

We discretize the variational problem using the finite element method (FEM).
The computational domain is discretized as described in \cite{chabannes_3d_2024}, with mesh refinement in the anterior and posterior chambers to accurately capture the flow and thermal dynamics.

We define the finite element spaces for velocity, pressure, and temperature as follows:

\begin{itemize}
    \item $\V_h := \left\{\vct{v}\in[\P_2(\Omega_\text{AH})]^3 \,\middle|\, \vct{v} = \vct{0}\text{ on }\partial\Omega\right\}$, the space of vector-valued piecewise quadratic polynomials that vanish on the boundary for velocity;
    \item $Q_h := \P_1(\Omega_\text{AH})$, the space of piecewise linear polynomials for pressure; and
    \item $W_h := \P_1(\Omega)$, the space of piecewise linear polynomials for temperature.
\end{itemize}

This choice corresponds to the $\P_2\P_1\text{--}\P_1$ finite element spaces, where $\P_2\P_1$ is the Taylor-Hood element for the velocity-pressure pair, which satisfies the LBB (Ladyzhenskaya-Babu\v{s}ka-Brezzi) stability condition~\cite{elman_finite_2014}.
Other discretization strategies could be employed, such as $\P_1\P_1\text{--}\P_1$ or $\P_2\P_1\text{--}\P_2$.
The former requires a stabilization term to ensure stability, while the latter is more computationally expensive and may not provide significant improvements in accuracy.

The discrete problem reads: Find $(\vct{u}_h, p_h, T_h) \in \V_h \times Q_h \times W_h$ such that for all $(\vct{v}_h, q_h, \varphi_h) \in \V_h \times Q_h \times W_h$:

\begin{equation}
\left\{
\begin{alignedat}{2}
    a_1(\vct{u}_h,\vct{u}_h, \vct{v}_h) + a_2(\vct{u}_h, \vct{v}_h) &+ b(p_h, \vct{v}_h) &+ d(T_h, \vct{v}_h) &= \ell_1(\vct{v}_h), \\
    b(q_h, \vct{u}_h) & & &= 0,\\
    e(\vct{u}_h, T_h, \varphi_h) & &+ f(T_h, \varphi_h) &= \ell_2(\varphi_h).
\end{alignedat}
\right.
\label{eq:pb-var-disc}
\end{equation}

The system \eqref{eq:pb-var-disc} is non-linear because of the terms $a_1$ and $e$.
We employ Newton's method to solve it iteratively.
Precisely, it consists of starting from an initial guess $(\vct{u}^0, p^0, T^0)$, and iteratively compute $(\vct{u}^{k+1}, p^{k+1}, T^{k+1})$ as the solution of the non-linear system at each iteration.
We set the correction terms $\delta \vct{u}^k := \vct{u}^{k+1} - \vct{u}^k$, $\delta p^k := p^{k+1} - p^k$, and $\delta T^k := T^{k+1} - T^k$.
Given $(\vct{u}^k, p^k, T^k)$, we define the \emph{nonlinear residual} associated with the variational formulation \eqref{eq:pb-var-disc} as:

\begin{equation}
\left\{
\begin{alignedat}{2}
    r_{\vct{u}}^k(\vct{v}) &:= \ell_1(\vct{v}) - a_1(\vct{u}^k, \vct{u}^k, \vct{v}) - a_2(\vct{u}^k, \vct{v}) - b(p^k, \vct{v}) - d(T^k, \vct{v}), \\
    r_p^k(q)               &:= -b(q, \vct{u}^k), \\
    r_T^k(\varphi)         &:= \ell_2(\varphi) - e(\vct{u}^k, T^k, \varphi) - f(T^k, \varphi).
\end{alignedat}
\right.
\end{equation}

Following \cite[Ch.~8]{elman_finite_2014}, by dropping the quadratics terms, the correction terms verify the following weak linear problem:
$\forall (\vct{v}, q, \varphi) \in \V \times Q \times T$:

\begin{equation}
\left\{
\begin{alignedat}{2}
    a_1(\delta \vct{u}^k, \vct{u}^k, \vct{v}) + a_1(\vct{u}^k, \delta \vct{u}^k, \vct{v}) + a_2(\delta \vct{u}^k, \vct{v}) + b(\delta p^k, \vct{v}) + d(\delta T^k, \vct{v}) &= r_{\vct{u}}^k(\vct{v}), \\
    b(\delta p^k, \vct{u}^k) &= r_p^k(q), \\
    e(\delta \vct{u}^k, T^k, \varphi) + e(\vct{u}^k, \delta T^k, \varphi) + f(\delta T^k, \varphi) &= r_T^k(\varphi).
\end{alignedat}
\right.
\end{equation}

In the discretized spaces $\V_h\times Q_h\times T_h$, the discrete counterpart of this variational problem is solved.
To define the corresponding linear algebra problem, we set the basis of the discrete spaces: $\{\vct{\lambda}_i\}_{i=1}^{N_u}$ is a basis of $\V_h$, $\{\mu_j\}_{j=1}^{N_p}$ is a basis of $Q_h$, and $\{\xi_l\}_{l=1}^{N_T}$ is a basis of $T_h$.
Setting $\vctAlg{u}$, $\vctAlg{\Delta u}$, $\vctAlg{p}$, $\vctAlg{\Delta p}$, $\vctAlg{T}$, and $\vctAlg{\Delta T}$ the vectors of the coefficients of the basis functions in the corresponding basis of $\vec{u}$, $\delta \vec{u}$, $p$, $\delta p$, $T$, and $\delta T$ respectively, the algebraic problem reads:

\begin{equation}
    \begin{bmatrix}
        \matAlg{V}^k + \matAlg{W}^k + \matAlg{N} & \matAlg{B}^T & \matAlg{D} \\
        \matAlg{B}                               & \matAlg{0}   & \matAlg{0} \\
        \matAlg{E}_1^k                           & \matAlg{0}   & \matAlg{E}_2^k + \matAlg{F}
    \end{bmatrix}
    \begin{bmatrix}
        \vctAlg{\Delta u} \\
        \vctAlg{\Delta p} \\
        \vctAlg{\Delta T}
    \end{bmatrix}
    =
    \begin{bmatrix}
        \vctAlg{r}_{\vct{u}}^k \\
        \vctAlg{r}_p^k \\
        \vctAlg{r}_T^k
    \end{bmatrix},
    \label{eq:system-lin-alg}
\end{equation}
where all elements of this system are defined on the basis of the discrete spaces:
\begin{equation}
    \begin{gathered}
        \matAlg{V}^k           = \begin{bmatrix}a_1(\vct{u}^k, \vct{\lambda}_i, \vct{\lambda}_j)\end{bmatrix} \in \R^{N_u\times N_u}, \hspace{0.9em}
        \matAlg{W}^k           = \begin{bmatrix}a_1(\vct{\lambda}_i, \vct{u}^k, \vct{\lambda}_j)\end{bmatrix} \in \R^{N_u\times N_u}, \hspace{0.9em}
        \matAlg{N}             = \begin{bmatrix}a_2(\vct{\lambda}_i, \vct{\lambda}_j)\end{bmatrix}            \in \R^{N_u\times N_u}, \\
        \matAlg{B}             = \begin{bmatrix}b(\mu_l, \vct{\lambda}_j)\end{bmatrix}                        \in \R^{N_p\times N_u}, \quad
        \matAlg{D}             = \begin{bmatrix}d(\vct{\lambda}_i, \xi_l)\end{bmatrix}                        \in \R^{N_u\times N_T}, \\
        \matAlg{E}_1^k         = \begin{bmatrix}e(\vct{\lambda}_i, T^k, \xi_l)\end{bmatrix}                   \in \R^{N_u\times N_T}, \quad
        \matAlg{E}_2^k         = \begin{bmatrix}e(\vct{u}^k, \xi_l, \xi_m)\end{bmatrix}                       \in \R^{N_T\times N_T}, \quad
        \matAlg{F}             = \begin{bmatrix}f(\xi_l, \xi_m)\end{bmatrix}                                  \in \R^{N_T\times N_T}, \\
        \vctAlg{r}_{\vct{u}}^k = \begin{bmatrix}r_{\vct{u}}^k(\vct{\lambda}_i)\end{bmatrix}                   \in \R^{N_u}, \quad
        \vctAlg{r}_p^k         = \begin{bmatrix}r_p^k(\mu_j)\end{bmatrix}                                     \in \R^{N_p}, \quad
        \vctAlg{r}_T^k         = \begin{bmatrix}r_T^k(\xi_l)\end{bmatrix}                                     \in \R^{N_T}.
    \end{gathered}
    \label{eq:algebraic-form}
\end{equation}

From the initial guess, we iteratively solve the linear algebra problem \eqref{eq:system-lin-alg} at each Newton iteration, until the relative increment $\operatorname{Crit}^k := \operatorname{max}\{\|\delta \vct{u}^k\|/\|\delta \vct{u}^0\|,\|\delta p^k\|/\|\delta p^0\|, \|\delta T^k\|/\|\delta T^0\|\} \leqslant \varepsilon_{\operatorname{tol}}$ is reached for a given tolerance $\varepsilon_{\operatorname{tol}}>0$.
\cref{algo:fe-newton} summarizes the Newton iteration loop.

\begin{algorithm}
    \KwIn{$\{\vctAlg{u}^0, \vctAlg{p}^0, \vctAlg{T}^0,\varepsilon_{\operatorname{tol}}\}$.}
    $(\vctAlg{u}^0, \vctAlg{p}^0, \vctAlg{T}^0) \leftarrow$ initial guess\;
    Assemble $\matAlg{N}$, $\matAlg{B}$, $\matAlg{D}$, $\matAlg{F}$\;
    \While{$\operatorname{Crit}^k > \varepsilon_{\operatorname{tol}}$}{
        Assemble $\matAlg{V}^k$, $\matAlg{W}^k$, $\matAlg{E}_1^k$, $\matAlg{E}_2^k$, $\vctAlg{r}_{\vct{u}}^k$, $\vctAlg{r}_p^k$, $\vctAlg{r}_T^k$\;
        $(\vctAlg{\Delta u}, \vctAlg{\Delta p}, \vctAlg{\Delta T}) \leftarrow$ solution to System \eqref{eq:system-lin-alg}\;
        $\vctAlg{u}^{k+1} \leftarrow \vctAlg{u}^k + \alpha\vctAlg{\Delta u}$, $\vctAlg{p}^{k+1} \leftarrow \vctAlg{p}^k + \alpha\vctAlg{\Delta p}$, $\vctAlg{T}^{k+1} \leftarrow \vctAlg{T}^k + \alpha\vctAlg{\Delta T}$\;
    }
    \KwOut{$(\vctAlg{u}^{k+1}, \vctAlg{p}^{k+1}, \vctAlg{T}^{k+1})$.}
    \caption{Newton iteration loop.}
    \label{algo:fe-newton}
\end{algorithm}

The parameter $\alpha\in[0,1]$ is a relaxation factor that can be adjusted, using the line search method of PETSc~\cite{balay_petsc_2024}, to improve the convergence of the Newton iteration.

\subsection{Solution strategy}
\label{sec:comp-strategy}

We implement the computational framework using the \textsf{heatfluid} toolbox of \feelpp{}\footnote{See documentation: \url{https://docs.feelpp.org/toolboxes/latest/heatfluid/}}~\cite{prudhomme_feelppfeelpp_2024} to solve~\cref{algo:fe-newton}.
Efficiently solving the resulting linear systems at each Newton iteration is crucial.

Direct solvers become impractical for large-scale problems due to computational and memory constraints.
Therefore, we employ iterative solvers with appropriate preconditioners to enhance convergence.
The preconditioner is a key component in the iterative solver, as it actually enables the solver to converge.
We utilize a \emph{field-split} preconditioning strategy, where the global system is partitioned into smaller blocks corresponding to different physical fields, namely fluid and thermal fields.
This approach allows us to apply specialized solvers and preconditioners to each block.


For clarity, we rewrite the system \eqref{eq:system-lin-alg} with block notation, omitting the superscript $k$:

\begin{equation}
    \left[
    \begin{array}{cc|c}
        \matAlg{\widetilde{A}} & \matAlg{B}^T & \matAlg{D} \\
        \matAlg{B}             & \matAlg{0}   & \matAlg{0} \\
        \hline
        \matAlg{E}             & \matAlg{0}   & \matAlg{\widetilde{F}}
    \end{array}
    \right]
    \left[
    \begin{array}{cc}
        \vctAlg{\Delta u} \\
        \vctAlg{\Delta p} \\
        \hline
        \vctAlg{\Delta T}
    \end{array}
    \right]
    =
    \left[
    \begin{array}{cc}
        \vctAlg{r}_{\vct{u}} \\
        \vctAlg{r}_p \\
        \hline
        \vctAlg{r}_T
    \end{array}
    \right]
    \quad
    \Longleftrightarrow:
    \quad
    \underbrace{
    \left[
    \begin{array}{c|c}
        \matAlg{K}_{0,0} & \matAlg{K}_{0, 1} \\
        \hline
        \matAlg{K}_{1,0} & \matAlg{K}_{1, 1}
    \end{array}
    \right]
    }_{=:\matAlg{K}}
    \begin{bmatrix}
        \vctAlg{\Delta}_\text{fluid} \\
        \vctAlg{\Delta}_\text{heat}
    \end{bmatrix}
    =
    \begin{bmatrix}
        \vctAlg{r}_\text{fluid} \\
        \vctAlg{r}_\text{heat}
    \end{bmatrix}.
\end{equation}
The main idea of \emph{additive fieldsplit} preconditioner is to approximate the inverse of the matrix $\mat{K}$ by the matrix
\begin{equation}
    \begin{bmatrix}
        \matAlg{K}_{0,0}^{-1} & \matAlg{0} \\
        \matAlg{0}            & \matAlg{K}_{1,1}^{-1}
    \end{bmatrix},
\end{equation}
where the inverses of the diagonal blocks are applied separately, with appropriate solvers and associated preconditioners.

The heat block $\matAlg{K}_{1,1}$ inverse is approximated using a few iterations of \emph{GAMG} --- Geometric Algebraic Multigrid --- from PETSc~\cite{balay_petsc_2024}.
This preconditioner efficiently handles large sparse matrices by recursively coarsening and solving the problem on multiple levels, significantly accelerating the convergence.

On the other hand, concerning the fluid block $\matAlg{K}_{0,0}$, the inverse is approximated using the Schur complement, as proposed in \cite[Ch.~9]{elman_finite_2014}, where another field split preconditioner is implemented: the degrees of freedom are now divided into velocity and pressure blocks.
Then, the LDU decomposition of the matrix $\matAlg{K}_{0, 0}$ is computed as follows:

\begin{equation}
    \matAlg{K}_{0, 0} = \left[
        \begin{array}{c|c}
            \matAlg{\widetilde{A}} & \matAlg{B}^T \\
            \hline
            \matAlg{B}             & \matAlg{0}
        \end{array}
    \right]
    = \underbrace{\begin{bmatrix}
        \matAlg{I} & \matAlg{0} \\
        \matAlg{B}\,\matAlg{\widetilde{A}}^{-1} & \matAlg{I}
    \end{bmatrix}}_{\matAlg{L}}
    \underbrace{\begin{bmatrix}
        \matAlg{\widetilde{A}} & \matAlg{0} \\
        \matAlg{0}         & \matAlg{S}
    \end{bmatrix}}_{\matAlg{D}}
    \underbrace{\begin{bmatrix}
        \matAlg{I} & \matAlg{\widetilde{A}}^{-1}\matAlg{B}^T \\
        \matAlg{0} & \matAlg{I}
    \end{bmatrix}}_{\matAlg{U}},
    \label{eq:ldu}
\end{equation}
where $\matAlg{S} = -\matAlg{B}\,\matAlg{\widetilde{A}}^{-1}\matAlg{B}^T$ is the \emph{Schur complement operator}\index{Schur complement}.
We employ here an \emph{upper} strategy, namely we approximate the inverse of $\matAlg{K}_{0, 0}$ by the matrix $(\matAlg{D}\,\matAlg{U})^{-1}$:
\begin{equation}
    \matAlg{K}_{0,0}^{-1} \approx
    \begin{bmatrix}
        \matAlg{I} & -\matAlg{\widetilde{A}}^{-1}\matAlg{B}^T \\
        \matAlg{0} & \matAlg{I}
    \end{bmatrix}
    \begin{bmatrix}
        \matAlg{\widetilde{A}}^{-1} & \matAlg{0} \\
        \matAlg{0}                  & \matAlg{S}^{-1}
    \end{bmatrix}.
    \label{eq:approx-schur}
\end{equation}
Note that these two matrices are not computed explicitly, as we are computing the action of the inverse on a vector.
Finally, another fieldsplit is applied to the velocity block $\matAlg{\widetilde{A}}$, where the velocity components are solved separately using a block Jacobi preconditionner, decoupling the velocity components.

We summarize in \cref{tab:pc_summary} the preconditioners used for the different blocks of the system \eqref{eq:system-lin-alg}, as well as the Krylov Subspace Method (KSP) employed to solve the linear system.
These methods can be either the \emph{Generalized Minimal Residual} (GMRES) or the \emph{Flexible GMRES} (FGMRES) methods, depending on the block being solved.
In some cases, the \texttt{preonly} solver is used, which indicates that no Krylov subspace solver is executed, rather only the associated preconditioner is applied once.
More details on the setting of these preconditioners can be found in the configuration file presented in \cite{saigre_mesh_2024}.


\begin{table}
    \centering
    \begin{tabular}{cccc}
        \toprule
        \textbf{Block}       & \textbf{Description}    & \textbf{Preconditioner}      & \textbf{Krylov solver} \\
        \midrule
        $\matAlg{K}$         & Overall coupled problem & \texttt{fieldsplit additive} & \texttt{gmres} \\
        $\matAlg{K}_{1,1}$   & Heat split subproblem   & \texttt{gamg}                & \texttt{gmres} \\
        $\matAlg{K}_{0,0}$   & Fluid split subproblem  & \texttt{fieldsplit schur}    & \texttt{fgmres} \\
        $\matAlg{\widetilde{A}}$ & Velocity block (split)  & \texttt{jacobi}          & \texttt{preonly} \\
        $\matAlg{S}$         & Schur complement        & \texttt{gamg}                & \texttt{preonly} \\
        \bottomrule
    \end{tabular}
    \caption{Description of the solvers and preconditioners used for the different blocks of the system \eqref{eq:system-lin-alg}. In monospaced font, we indicate the actual PETSc component used for the solver and preconditioner used. }
    \label{tab:pc_summary}
\end{table}

\subsection{\review{Numerical considerations on the wall shear stress computation}}
\label{sec:comp-framework:wss}

We emphasize that it is difficult to measure WSS experimentally, and numerical simulations are a valuable tool for investigating the complex interactions between the different physical phenomena in the eye, and in particular the WSS distribution~\cite{kumar_numerical_2006,yamamoto_effect_2010,qin_aqueous_2021}.

However, from a computational standpoint, accurately determining the WSS necessitates
\begin{inparaenum}[\it (i)]
    \item a sufficiently fine mesh resolution near the walls to capture the sharp velocity gradients present in these regions, and
    \item a consistent discretization strategy to ensure the accuracy of the computed WSS.
\end{inparaenum}
To achieve the former, we apply the mesh discretization strategy ensuring the mesh is adequately refined near the boundaries of $\Omega_\text{AH}$, as depicted in \cref{fig:coupled:wss_mesh_refinement}.
Regarding the latter, the mesh elements at the interfaces between the domain $\Omega_\text{AH}$ and the surrounding tissues exhibit varying normal vectors, the direction of which are not uniformly defined across all elements.
Since the velocity field is approximated using a $\P_2$ finite element space, its gradient is naturally of order 1, but discontinuous.
Accordingly, we employ a $\P_{1,\mathrm{disc}}$ approximation space to compute the WSS, ensuring consistency and accuracy.
In addition, we compute a continuous piece-wise linear approximation ($[L^2]^3$-projection of the discontinuous approximation) of the WSS to avoid numerical oscillations and improve the visualization of the results.

\begin{figure}
    \centering
    \begin{tikzpicture}
        \node[anchor=south west, inner sep=0] (image) at (0,0) {\includegraphics[width=0.5\textwidth]{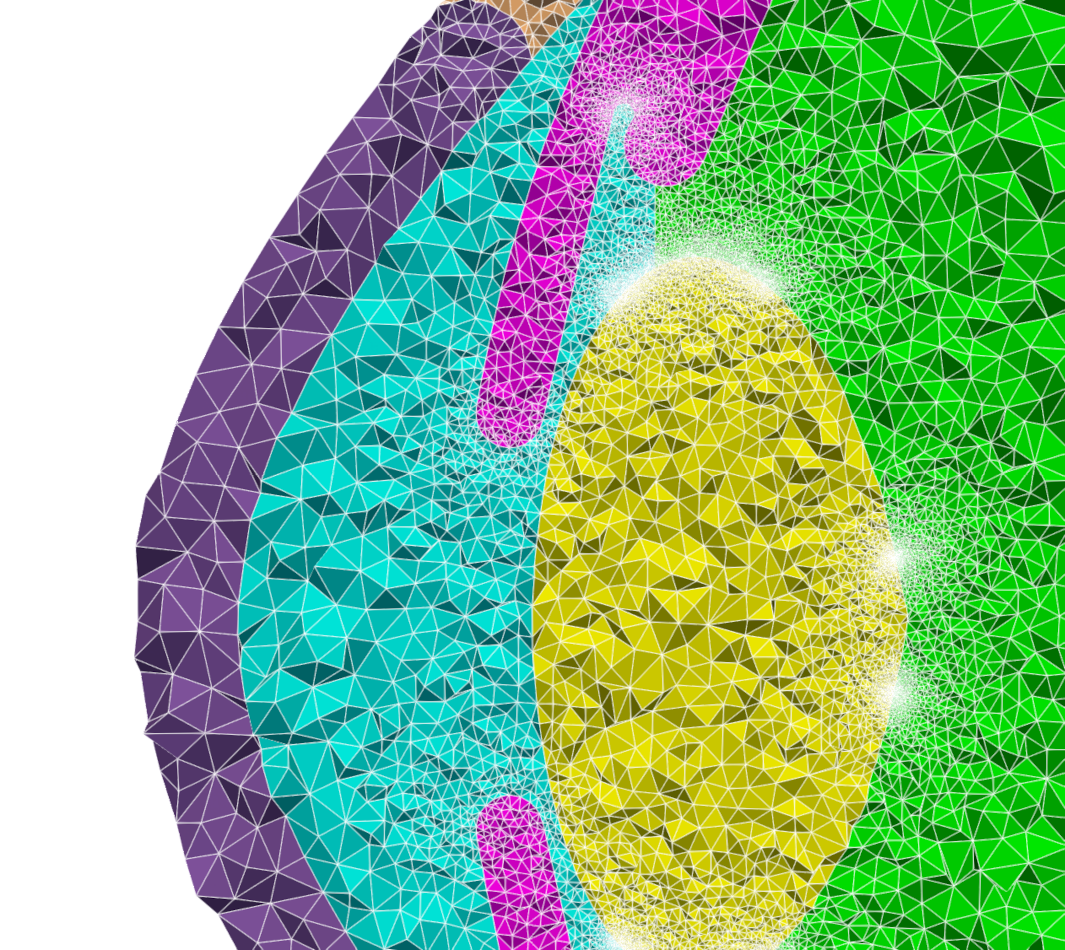}};
        \node[anchor=south west, inner sep=0, draw, color=colorB, line width=2pt] (zoom) at (8,3) {\includegraphics[width=0.2\textwidth]{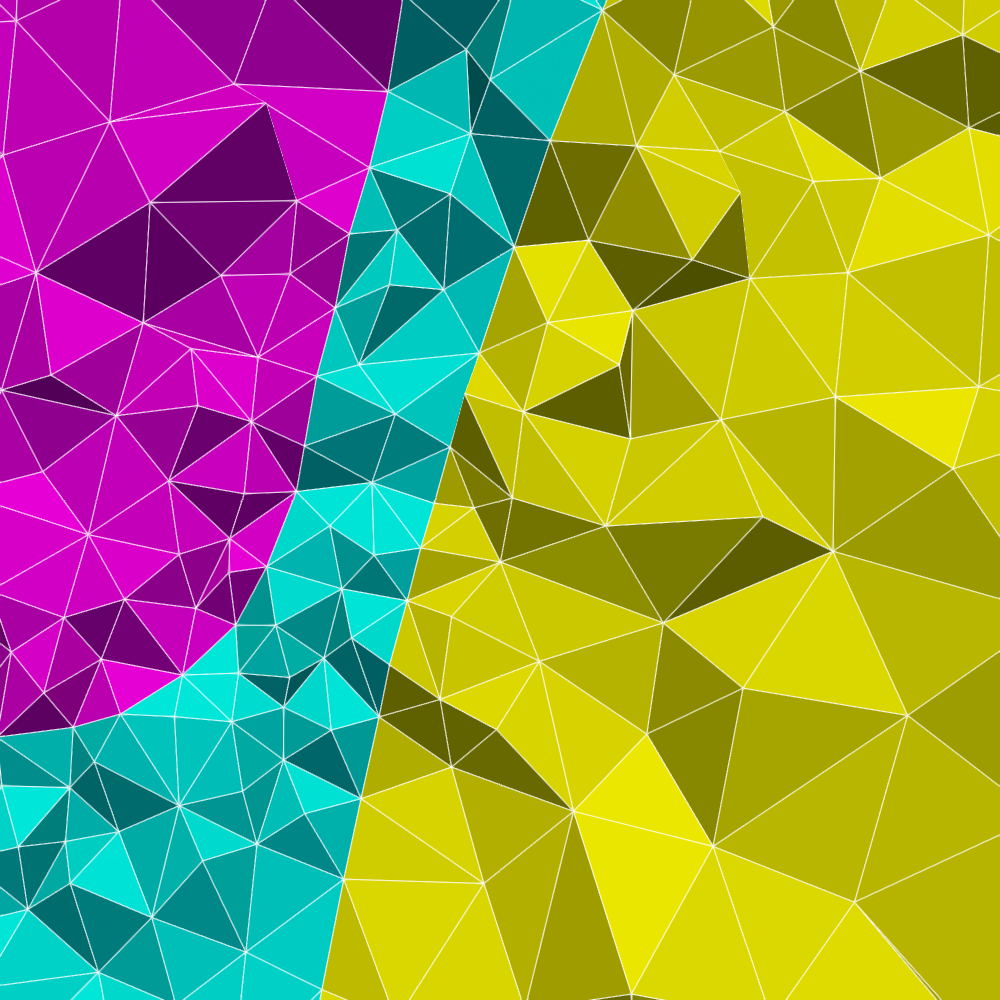}};
        \node[draw=colorB, thick, rectangle, minimum width=0.5cm, minimum height=0.5cm] (rect) at (4.6, 4.3) {};
        \draw[colorB] (rect.north west) -- (zoom.north west);
        \draw[colorB] (rect.south west) -- (zoom.south west);
    \end{tikzpicture}
    \caption{Refinement of the mesh 
    near the wall boundary of $\Omega_\text{AH}$.}
    \label{fig:coupled:wss_mesh_refinement}
\end{figure}

\input{4-numerical-results}
\subsection*{\review{Impact of the posture on the wall shear stress}}
\label{sec:coupled:wss}

We compute the WSS on the surfaces of the anterior chamber to analyze the shear stresses resulting from the AH flow under different postural orientations.
\cref{fig:wss_distribution} displays the WSS magnitude over the corneal endothelium for the standing, prone, and supine positions.
\review{As pointed out in \Cref{sec:comp-framework:wss}, we performed a strategy of discretization to ensure correct refinement near the boundaries of the domain $\Omega_\text{AH}$.
The mesh used in the simulations presented here is the mesh \texttt{Mr5}, described in \cref{tab:coupled:mesh}.
}

\begin{figure}[ht]
    \centering
    \def\subfigwidth{0.25\textwidth}
    \vspace{-0.5cm}
    \newcommand{\figwss}[7][west]{%
\vspace{-0.5cm}
\begin{tikzpicture}
\begin{axis}[
    colorbar,
    colormap={Rainbow Desaturated}{
        rgb255(0cm)=(70,70,219);
        rgb255(0.143cm)=(0,0,91);
        rgb255(0.285cm)=(0,255,255);
        rgb255(0.429cm)=(0,128,0);
        rgb255(0.571cm)=(255,255,0);
        rgb255(0.714cm)=(255,97,0);
        rgb255(0.857cm)=(106,0,0);
        rgb255(1cm)=(223,77,77);
    },
    enlargelimits=false,
    colorbar horizontal,
    point meta min=#3,
    point meta max=#4,
    axis line style = {draw=none},
    tick style = {draw=none},
    xtick = \empty, ytick = \empty,
    colorbar style={
        xlabel = {$\|\vct{\tau}_w\|$ [\si{\Pa}]},
        height=0.05*\pgfkeysvalueof{/pgfplots/parent axis height},
        width=0.9*\pgfkeysvalueof{/pgfplots/parent axis width},
        at={(0.5,-0.02)},
        anchor=center,
        tick label style={font=\footnotesize},
        #6
    },
    width=#5
]
    \addplotgraphicsnatural[xmin=0, xmax=1, ymin=0, ymax=1]{\currfiledir #2}

    \begin{scope}[xshift=0, yshift=90]
        \draw[->] #7 node[midway, anchor=#1] {$\vct{g}$};
    \end{scope}

    \begin{scope}[xshift=95, yshift=90]
        \draw[->, red] (0.1, 0.1) -- (0.09, 0.) node[pos=1, anchor=west] {$x$};
        \draw[->, green!60!black] (0.1, 0.1) -- (0.05, 0.04) node[pos=1, anchor=north east] {$y$};
        \draw[->, blue] (0.1, 0.1) -- (0.02, 0.13) node[pos=1, anchor=east] {$z$};
    \end{scope}

\end{axis}
\end{tikzpicture}
}

\def\subfigwidth{0.39\textwidth}
\subfigure[Standing.\label{fig:eye:heatfluid:wss:standing}]{
    \figwss[north]{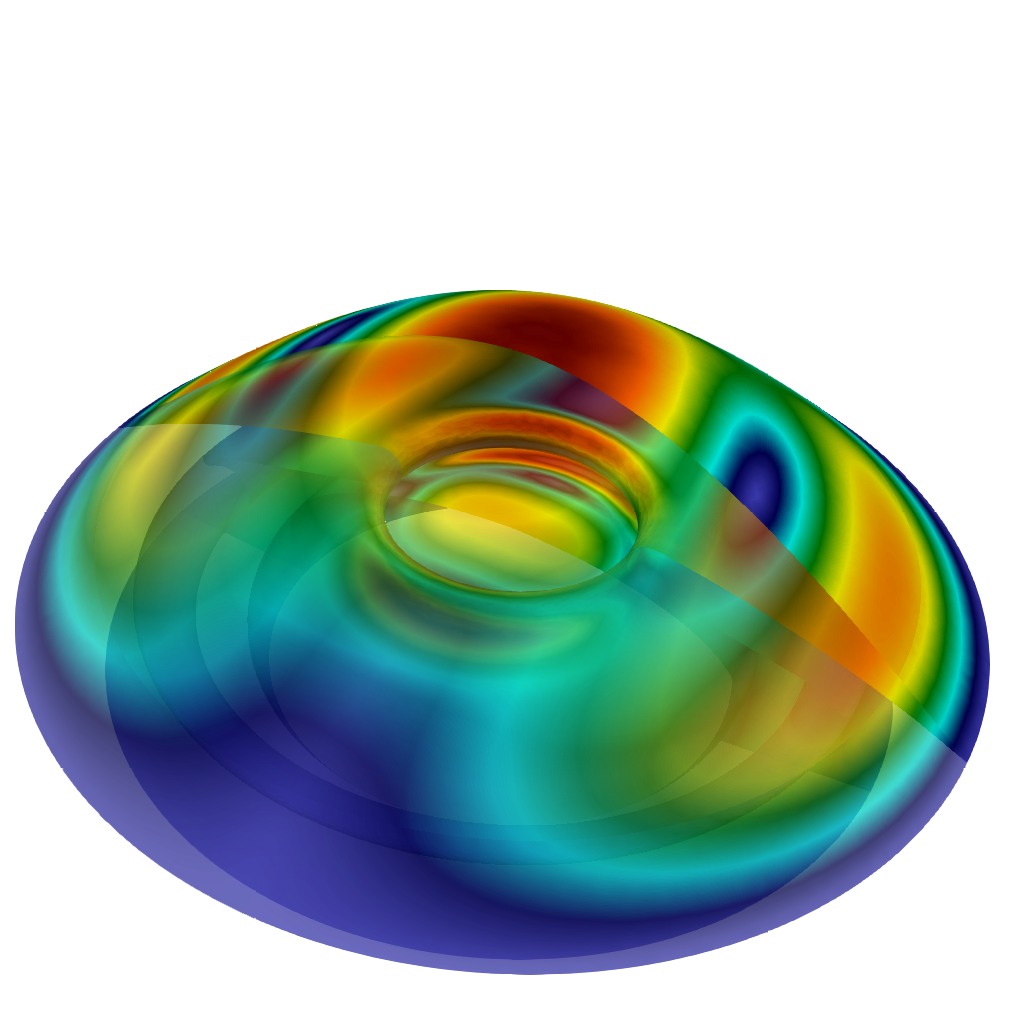}{0}{0.0007718437610900397}{\subfigwidth}{xtick={0,0.0001,0.0002,0.0003,0.0004,0.0005,0.0006,0.0007}}{(0.05, 0.04) -- (0.1, 0.1)}
}
\subfigure[Prone.\label{fig:eye:heatfluid:wss:prone}]{
    \figwss{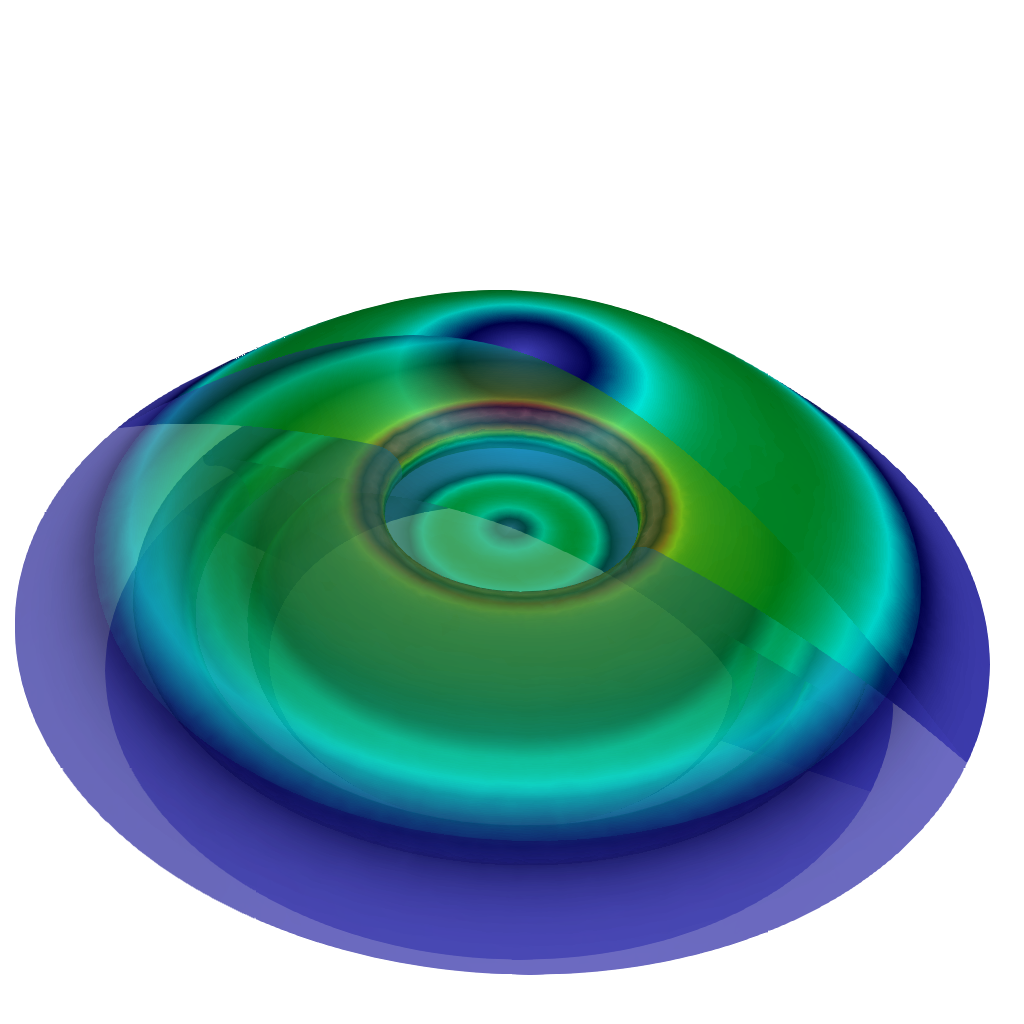}{0}{0.00007180730182807427}{\subfigwidth}{xtick={0,0.00002,0.00004,0.00006,0.00007}}{(0.09, 0.) -- (0.1, 0.1)}
}
\subfigure[Supine.\label{fig:eye:heatfluid:wss:supine}]{
    \figwss{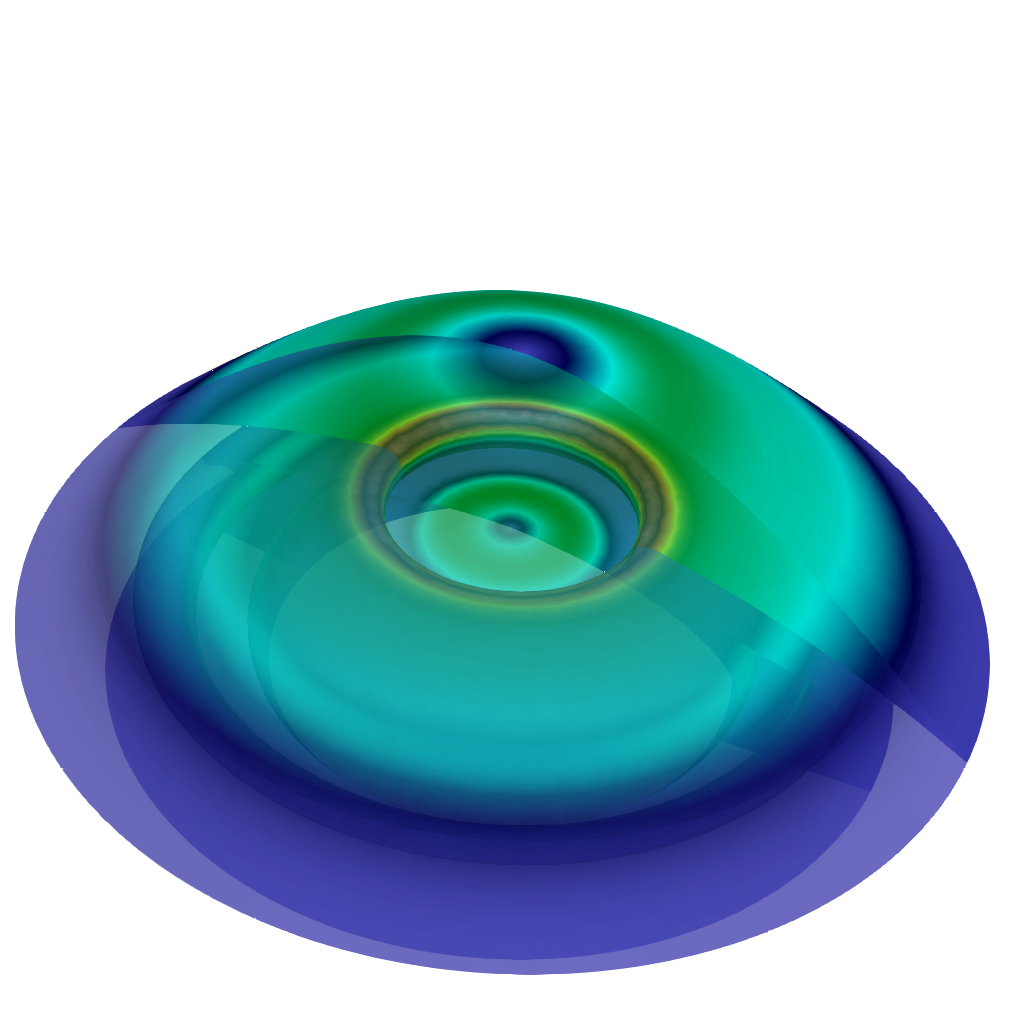}{0}{0.00009524954933823035}{\subfigwidth}{xtick={0,0.00002,0.00004,0.00006,0.00008,0.00009}}{(0.1, 0.1) -- (0.09, 0.)}
}
    \caption{Wall shear stress distribution on the corneal endothelium for the three postural orientations.}
    \label{fig:wss_distribution}
\end{figure}

Comparing the distribution of the WSS, shown in \cref{fig:eye:heatfluid:wss:standing,fig:eye:heatfluid:wss:supine}, with results presented in \cite[Fig.~5]{qin_aqueous_2021}, we observe a similar pattern, with higher values near the corneal endothelium and lower values near the iris and lens surfaces.
\review{
We observe that the WSS distribution varies with different postural orientations. In the standing position (\cref{fig:eye:heatfluid:wss:standing}), the stress is more concentrated towards the lower part of the corneal endothelium, likely due to the effect of gravity.
When in the prone position (\cref{fig:eye:heatfluid:wss:prone}), the stress distribution shifts, showing higher values in the anterior region of the corneal endothelium.
In the supine position (\cref{fig:eye:heatfluid:wss:supine}), the WSS appears more evenly distributed across the corneal endothelium, with a slight concentration towards the posterior region.
These variations highlight the influence of body orientation on the WSS distribution within the eye.
}

\review{As a further refinement and building} on the sensitivity analysis (SA) presented in our previous work \cite{saigre_model_2024}, we next examine the effect of ambient temperature on the WSS magnitude.
Specifically, we calculate the average WSS magnitude across three surfaces within the anterior chamber as a function of ambient temperature for each of the three postural orientations:
\begin{inparaenum}[\it (i)]
    \item on the corneal endothelium, denoted by $\Gamma_\text{cornea}$,
    \item on the iris surface, denoted by $\Gamma_\text{iris}$, and
    \item on the entire boundary of the anterior chamber, denoted by $\partial\Omega_\text{AH}$.
\end{inparaenum}
On the basis of \cite{saigre_model_2024}, we conduct our analysis for a range of $T_\text{amb}$ of $[283, 323]$ \si{\kelvin}, with a baseline value of $T_\text{amb} = \SI{294}{\kelvin}$.

The results are presented in \cref{fig:wss_temperature}.
The first striking result is that the WSS magnitude is significantly influenced by the postural orientation or the subject:
in horizontal positions (prone and supine), the WSS magnitude is ten times lower than in the standing position.
This observation is coherent with the fact the for corneal \review{therapy}, after the injection of endothelial cells inside the aqueous humor or the patient, the patient is placed in prone position for three hours to enhance the adhesion of the cells to the cornea~\cite{kinoshita_injection_2018}. In addition, the results indicate a complex dependency of the WSS magnitude on ambient temperature.
In the three positions, the WSS magnitude reaches a minimum around $T_{\text{amb}} = \qty{310}{\kelvin}$, corresponding to the body temperature. Such a limitation is further discussed in the next section.

In conclusion, our findings show that the WSS magnitude is significantly influenced by both postural orientation and ambient temperature.
These factors should be thus considered in the design of ocular devices and drug delivery systems to optimize therapeutic outcomes.

\begin{figure}[ht]
    \centering
    \newcommand{\figwsstamb}[1]{

\pgfplotstableread[col sep=comma]{\currfiledir #1}\wssdata
\begin{tikzpicture}[scale=0.71]
    \begin{axis}[
    xlabel={$T_\text{amb}$ [\si{\kelvin}]},
    ylabel={$\|\vct{\tau}_w\|$ [\si{\Pa}]},
    xmajorgrids=true,
    ymajorgrids=true,
    legend cell align={left},
    width=0.4\textwidth,
    height=0.55\textwidth,
    ]
    \addplot+ [semithick, colorA, mark=*, mark size=2, mark options={solid}] table[x=TAMB,y=WSS_total] {\wssdata};
    \addlegendentry{$\partial\Omega_\text{AH}$}
    \addplot+ [semithick, colorB, mark=square*, mark size=2, mark options={solid}] table[x=TAMB,y=WSS_cornea] {\wssdata};
    \addlegendentry{$\Gamma_\text{cornea}$}
    \addplot+ [semithick, colorD, mark=x, mark size=2, mark options={solid, line width=1pt}] table[x=TAMB,y=WSS_iris] {\wssdata};
    \addlegendentry{$\Gamma_\text{iris}$}
    \draw[thick, dashed, color=black!60, line width=2pt] (294, \pgfkeysvalueof{/pgfplots/ymin}) -- (294, \pgfkeysvalueof{/pgfplots/ymax});
    \end{axis}
\end{tikzpicture}
}

\subfigure[Prone position.]{
    \figwsstamb{img-wss-data_prone.csv}
}
\subfigure[Supine position.]{
    \figwsstamb{img-wss-data_supine.csv}
}
\subfigure[Standing position.]{
    \figwsstamb{img-wss-data_standing.csv}
}
    \pgfplotstableread[col sep=comma]{\currfiledir img-wss-data_prone.csv}\pronedata
\pgfplotstableread[col sep=comma]{\currfiledir img-wss-data_supine.csv}\supinedata
\pgfplotstableread[col sep=comma]{\currfiledir img-wss-data_standing.csv}\standingdata

\newcommand{\figwsstambbis}[1]{

\begin{tikzpicture}[scale=0.71]
    \begin{axis}[
    xlabel={$T_\text{amb}$ [\si{\kelvin}]},
    ylabel={$\|\vct{\tau}_w\|$ [\si{\Pa}]},
    xmajorgrids=true,
    ymajorgrids=true,
    legend cell align={left},
    width=0.4\textwidth,
    height=0.55\textwidth,
    ]
    \addplot+ [semithick, colorA, mark=*, mark size=2, mark options={solid}] table[x=TAMB,y=#1] {\pronedata};
    \addlegendentry{Prone}
    \addplot+ [semithick, colorB, mark=square*, mark size=2, mark options={solid}] table[x=TAMB,y=#1] {\supinedata};
    \addlegendentry{Supine}
    \addplot+ [semithick, colorD, mark=x, mark size=2, mark options={solid, line width=1pt}] table[x=TAMB,y=#1] {\standingdata};
    \addlegendentry{Standing}
    \draw[thick, dashed, color=black!60, line width=2pt] (294, \pgfkeysvalueof{/pgfplots/ymin}) -- (294, \pgfkeysvalueof{/pgfplots/ymax});
    \end{axis}
\end{tikzpicture}
}

\subfigure[On $\partial\Omega_\text{AH}$.]{
    \figwsstambbis{WSS_total}
}
\subfigure[On $\Gamma_\text{cornea}$.]{
    \figwsstambbis{WSS_cornea}
}
\subfigure[On $\Gamma_\text{iris}$.]{
    \figwsstambbis{WSS_iris}
}
    \caption{Average of the wall shear stress magnitude as a function of the ambient temperature for the three postural orientations, at different location (top row), and impact of the posture on each specific boundaries (bottom row).
    The vertical line at $T_\text{amb} = \SI{294}{\kelvin}$ represents the baseline value as per \cref{tab:coupled:parameters}.}
    \label{fig:wss_temperature}
\end{figure}

\section{Conclusion and perspectives}
\label{sec:conclusion}

\review{In this work, we have} presented a comprehensive modeling and computational framework for simulating heat transfer within the human eyeball, coupled with the flow of AH in both the anterior and posterior chambers of a healthy eye.
Our complex model has undergone rigorous verification and validation against results from existing literature, demonstrating its accuracy and reliability.
Notably, the model accurately captured the impact of postural orientation on flow recirculations within the eye~\cite{wang_fluid_2016,kilgour_operator_2021,murgoitio-esandi_mechanistic_2023}, providing valuable insights into ocular physiology and the effects of gravity on intraocular fluid dynamics.

Additionally, we have computed the wall shear stress (WSS) distributions within the eye, providing a foundational layer for future applications in \review{therapeutic planning, such as drug delivery or surgery optimization}.
Our findings showed good agreement with previously reported results \cite{fernandez-vigo_computational_2018,kudsieh_numerical_2020,repetto_phakic_2015,qin_aqueous_2021,canning_fluid_2002}.
Moreover, we thoroughly assessed the impact of the postural orientation and of the ambient temperature on the WSS.
This analysis is crucial for understanding the mechanical forces acting on ocular tissues, which can influence drug absorption rates, endothelial cell health, and surgical outcomes~\cite{yang_unraveling_2022,fernandez-vigo_computational_2018}.

An important novelty of our work lies in the integration of high-performance computing (HPC) techniques to solve the coupled heat transfer on the entire eye geometry, with fluid flow equations in both the anterior and posterior chambers.
This holistic approach allows for a more accurate representation of intraocular phenomena compared to models that focus solely on the anterior chamber or neglect the posterior chamber.
By leveraging HPC resources efficiently, we can handle the computational demands of such detailed simulations, enabling high-resolution analyses that were previously impractical.

Despite these successes, a primary drawback of the present model is its computational cost, which remains relatively high—requiring several minutes on the specified hardware to perform a single simulation.
This computational intensity limits the feasibility of real-time simulations, which are desirable for clinical applications and interactive studies.

To address the computational challenges, we are working on improving the preconditioner for the conjugate heat transfer problem, in particular the Schur complement preconditioner for the fluid block.
The challenge is to significantly reduce the computational cost while enable many parameter or real-time evaluations.
We are currently developing model order reduction techniques tailored to our problem, extending our previous work~\cite{saigre_model_2024}.
These methods, such as the (certified) reduced basis method~\cite{prudhomme_reliable_2002}, aim to enable real-time simulations of coupled flow and heat transfer inside the human eyeball by reducing the computational complexity while preserving essential dynamics.
Implementing such techniques will significantly enhance the model's applicability in clinical settings, allowing for rapid simulations that can assist in diagnosis and treatment planning.

Another extension of this work involves incorporating AH inflow and outflow mechanisms, which were neglected in the current model under the assumption of minimal influence on overall flow dynamics, as in \cite{ng_comparative_2008, kumar_numerical_2006}.
Including AH production and drainage would provide a better understanding of intraocular fluid dynamics, especially under pathological conditions such as glaucoma, where these processes are disrupted.
This extension would require modeling AH production in the ciliary body and the trabecular meshwork's drainage function~\cite{guidoboni_mathematical_2019} by means of appropriate boundary conditions \review{complemented with reliable experimental data, in view of calibration and validation. In this direction, boundary conditions involving  pressure for the fluid flow, as employed in \cite{lamminsalo_extended_2018} could be adopted and incorporated in the mathematical and computational framework proposed in the present work, by means of a strategy similar to the one we developed in \cite{bertoluzza_boundary_2017}.}

\review{For the purposes of this contribution, we focused on how temperature gradient enables proper mixing of the fluid in the anterior chamber and on how changes in posture impact the overall dynamics. Whereas the presented model might provide valuable insights on drug delivery in the eye via intracameral injections~\cite{lamminsalo_extended_2018}, a more complex multi-physics description would be required for other delivery routes, such as intravitreal injections~\cite{sadeghi_mathematical_2024}. In particular, the vitreous gel filling the vitreous chamber could be modeled as a viscoelastic fluid, or a more complex coupling with porous media (for the vitreous, neural retina, RPE-choroid, sclera, trabecular meshwork, cornea) described by the Brinkman equations could be employed~\cite{lamminsalo_extended_2020}. In this manner, and provided that experimental data allow parameter calibration, model predictions in the posterior segment of the eye could be enriched to assess whether and to what extent these segments influence the overall dynamics.}

From a clinical perspective, our framework also holds significant potential for assessing the effects of topical administration of ophthalmic drugs, such as eye drops~\cite{bhandari_ocular_2021} or localized hyperthermia treatments~\cite{gongal_thermal_2023}., Additionally, it could be instrumental in evaluating cell injection treatments for internal pathologies, such as bullous keratopathy~\cite{kinoshita_injection_2018}. Future research~\cite{saigre_effect_2025} will focus on integrating drug transport models and cell injection into our simulations, enabling the study of diffusion, absorption, and interaction with ocular tissues.
This integration will facilitate the development of personalized medicine approaches and improve therapeutic strategies by predicting drug efficacy and optimizing dosing regimens.

In summary, this work lays the foundation for advanced computational modeling of ocular fluid dynamics and heat transfer, with promising applications in both research and clinical practice.
The ongoing developments aim to enhance the model's capabilities and usability, bringing us closer to real-time, patient-specific simulations that can inform diagnosis, treatment planning, and potentially device design in ophthalmology.

\section*{Acknowledgments}

The authors would like to acknowledge the support of the Cemosis platform at the University of Strasbourg and the French Ministry of Higher Education, Research and Innovation.

Part of this work was also funded by the France 2030 NumPEx Exa-MA (ANR-22-EXNU-0002) project managed by the French National Research Agency (ANR).


\section*{Material and methods: Conflict of interest statement}

The authors declare no potential conflict of interest.

\section*{Data availability statement}
\label{data}

The geometrical data that support the findings of this study are available in \cite{chabannes_3d_2024}.
All the codes used to run the simulations, as well as the meshed employed in this work, are available in \cite{saigre_mesh_2024}.

\printbibliography{}
\addcontentsline{toc}{section}{References}

\end{document}